\newcommand{\comment}[1]{}
\theoremstyle{plain}
\newtheorem{theo}{Theorem}[section]
\newtheorem{lem}[theo]{Lemma}
\newtheorem{cor}[theo]{Corollary}
\newtheorem{rem}[theo]{Remark}
\theoremstyle{definition}
\numberwithin{equation}{section}
\def\reals{\mathbb{R}}
\def\cf{c_{\mathsf{f}}}
\def\p{\partial}
\def\d{{\rm d}}
\def\dlo{{\rm d}\lambda_{1}}
\def\dld{{\rm d}\lambda_{d}}
\newcommand{\brac}[1]{\left(#1\right)}
\def\ol{\overline}
\newcommand{\set}[2]{\{#1\,\mid\,#2\}}
\DeclareMathOperator{\A}{A}
\def\na{\nabla}
\def\bna{\na_{\!\!\times}}
\DeclareMathOperator{\opdiv}{div}
\def\div{\opdiv}
\def\bdiv{\div_{\!\times}}
\DeclareMathOperator{\sop}{\mathcal{L}}
\def\om{\Omega}
\def\qt{\Xi}
\def\st{I\times\p\om}
\def\ft{\tilde{f}}
\def\ut{\tilde{u}}
\def\pt{\tilde{p}}
\DeclareMathOperator{\hilbert}{\sf H}
\DeclareMathOperator{\cont}{\sf C}
\DeclareMathOperator{\lebesgue}{\sf L}
\DeclareMathOperator{\divergence}{\sf D}
\def\ciga{\mathring{\cont}{}^{\infty}}
\def\cigaom{\ciga(\om)}
\newcommand{\lgen}[2]{\lebesgue^{#1}_{#2}}
\def\lt{\lgen{2}{}}
\def\ltom{\lt(\om)}
\def\ltqt{\lt(\qt)}
\newcommand{\hgen}[2]{\hilbert^{#1}_{#2}}
\newcommand{\hgenz}[2]{\mathring{\hilbert}{}^{#1}_{#2}}
\def\ho{\hgen{1}{}}
\def\hoom{\ho(\om)}
\def\hoga{\hgenz{1}{}}
\def\hogaom{\hoga(\om)}
\def\bho{\hgen{1}{\!\times}}
\def\bhoom{\bho(\om)}
\def\V{\hgen{1,\Delta}{}}
\def\Vom{\V(\om)}
\def\Vga{\hgenz{1,\Delta}{}}
\def\Vgaom{\Vga(\om)}
\def\hzo{\hgen{0;1}{}}
\def\hzoqt{\hzo(\qt)}
\def\hzost{\hgenz{0;1}{}}
\def\hzostqt{\hzost(\qt)}
\def\hoz{\hgen{1;0}{}}
\def\hozqt{\hoz(\qt)}
\def\hoo{\hgen{1;1}{}}
\def\hooqt{\hoo(\qt)}
\def\hoost{\hgenz{1;1}{}}
\def\hoostqt{\hoost(\qt)}
\def\W{\hgen{1;1,\Delta}{}}
\def\Wqt{\W(\qt)}
\def\Wst{\hgenz{1;1,\Delta}{}}
\def\Wstqt{\Wst(\qt)}
\newcommand{\dgen}[2]{\divergence^{#1}_{#2}}
\def\d{\dgen{}{}}
\def\dz{\dgen{}{0}}
\def\dom{\d(\om)}
\def\dzom{\dz(\om)}
\def\dqt{\d(\qt)}
\def\bd{\dgen{}{\!\times}}
\def\bdom{\bd(\om)}
\newcommand{\norm}[1]{|#1|}
\newcommand{\normltom}[1]{|#1|_{\ltom}}
\newcommand{\normltqt}[1]{|#1|_{\ltqt}}
\newcommand{\scp}[2]{\langle#1,#2\rangle}
\newcommand{\scpltom}[2]{\scp{#1}{#2}_{\ltom}}
\newcommand{\scpltqt}[2]{\scp{#1}{#2}_{\ltqt}}
\title[\sc Functional A Posteriori Error Equalities for Linear PDEs]
{\Large\sf An Elementary Method of Deriving A Posteriori Error Equalities\\
and Estimates for Linear Partial Differential Equations}
\author{Immanuel Anjam}
\author{Dirk Pauly}
\address{Fakult\"at f\"ur Mathematik,
Universit\"at Duisburg-Essen, Campus Essen, Germany}
\email[Immanuel Anjam]{immanuel.anjam@uni-due.de}
\email[Dirk Pauly]{dirk.pauly@uni-due.de}
\keywords{functional a posteriori error estimates, error equalities,
parabolic problems, mixed formulations, combined norms}
\subjclass{65N15}
\date{\today}
\begin{document}

%% preprint series UDE Mathematik
%%%%%%%%%%%%%%%%%%%%%%%%%%%%%%%%%
%\thispagestyle{empty}
%\Large
%\begin{center}SCHRIFTENREIHE DER FAKULT\"AT F\"UR MATHEMATIK\end{center}
%\vspace*{5mm}
%\begin{center}
%An Elementary Method of Deriving A Posteriori Error Equalities\\
%and Estimates for Linear Partial Differential Equations
%\end{center}
%\vspace*{5mm}
%\begin{center}by\end{center}
%\begin{center}
%Immanuel Anjam \& Dirk Pauly
%\end{center}
%\vspace*{5mm}
%\begin{center}
%SM-UDE-810
%\hspace{80mm}
%2016
%\end{center}
%\newpage
%\thispagestyle{empty}
%\vspace*{210mm}
%Received: \mydate
%\newpage
%\addtocounter{page}{-2}
%\normalsize
%%%%%%%%%%%%%%%%%%%%%%%%%%%%%%%%%

\begin{abstract}
In this paper we present a simple method of deriving a posteriori error equalities and estimates 
for linear elliptic and parabolic partial differential equations. 
The error is measured in a combined norm taking into account both the primal and dual variables. 
We work only on the continuous (often called functional) level 
and do not suppose any specific properties of numerical methods and discretizations.
\end{abstract}

\maketitle
\tableofcontents
%\newpage

%====================================================================
%====================================================================

\section{Introduction}

The results presented in this paper are functional type a posteriori error equalities and estimates. This type of error control is applicable for any usually (but not necessarily) conforming approximation and involves only global constants,
typically norms of the corresponding inverse operators, i.e., reciprocals of the first non-zero eigenvalues
of the respective differential operators, such as Friedrichs, Poincar\'e, or Maxwell constants. 
The first way of deriving these type error equalities and estimates is the dual variational technique, exposed in detail in the book \cite{NeittaanmakiRepin2004} by Repin and Neittaanm\"aki. In this method the starting point of deriving functionals controlling the error is the dual variational formulation of the problem in question. This method can be applied to a large class of static problems, namely, to convex problems. The second way to arrive at these type equalities and estimates is the method of integral identities, exposed in detail in the book \cite{repinbookone} by Repin (for a more computational point of view see \cite{MaliRepinNeittaanmaki2014} by Mali, Repin, and Neittaanm\"aki). In this method the starting point is the weak formulation of the problem in question. The latter method is easily applicable to linear problems, also including time-dependent problems.

In this paper we expose a way of deriving functional type error equalities and estimates which does not require knowledge of variational or weak formulations of the corresponding problems, and involves only elementary operations. This method was already used in \cite{anjampaulyeq} for static problems with lower order terms, and is extended in the present work to problems without lower order terms, and, more interestingly, to parabolic problems. We note that the method presented here will produce a posteriori error equalities and estimates for mixed approximations, i.e., the error will be measured in a combined norm taking into account both the primal and the dual variable. This is especially useful for mixed methods where one calculates an approximation for both the primal and dual variables, see, e.g., the book \cite{boffibrezzifortinbookone} by Boffi, Brezzi, and Fortin. 
Naturally, we call such an approximation pair a mixed approximation.

We demonstrate our method for the following two elliptic and two parabolic model problems
formulated in a domain $\om\subset\reals^{d}$ and a space-time cylinder $\Xi=I\times\om\subset\reals^{d+1}$,
respectively:
\begin{center}
\begin{tabular}{l@{$\qquad$}l@{$\qquad$}l@{$\,-\,$}l}
	Subsection \ref{ssec:RD} (elliptic) & time-independent Laplace reaction-diffusion & & $\mathring\Delta+1$ \\
	Subsection \ref{ssec:D} (elliptic) & time-independent Laplace diffusion (Poisson equation) & & $\mathring\Delta$ \\
	Subsection \ref{ssec:TRD} (parabolic) & time-dependent Laplace reaction-diffusion & $\p_{\circ}$ & $\mathring\Delta+1$ \\
	Subsection \ref{ssec:H} (parabolic) & time-dependent Laplace diffusion (heat equation) & $\p_{\circ}$ & $\mathring\Delta$ \\
\end{tabular}
\end{center}
Here we denote as usual by $\Delta=\div\na$ the Laplace operator and by $\p_{\circ}$ the time-derivative. 
Note that $\mathring\Delta$ denotes the Dirichlet Laplacian.
The crucial underlying idea is to use the following four isometries for the respective solution operators
mapping to the appropriate Sobolev spaces:
\begin{align*}
u&=(-\mathring\Delta+1)^{-1}f,
&
\normltom{f}^2
&=\norm{\Delta u}^2_{\ltom}
+2\norm{\na u}^2_{\ltom}
+\norm{u}^2_{\ltom}\\
u&=(-\mathring\Delta)^{-1}f,
&
\normltom{f}^2
&=\norm{\Delta u}^2_{\ltom}\\
u&=(\p_{\circ}-\mathring\Delta+1)^{-1}(f,u_{0}),
&
\norm{u_{0}}_{\ltom}^2
+\normltqt{f}^2
+\norm{\na u_{0}}_{\ltom}^2
&=\norm{\p_{\circ}u}^2_{\ltqt}
+\norm{\Delta u}^2_{\ltqt}
+\norm{\na u(T,\,\cdot\,)}_{\ltom}^2\\
&&
&\qquad
+2\norm{\na u}^2_{\ltqt}
+\norm{u}^2_{\ltqt}
+\norm{u(T,\,\cdot\,)}_{\ltom}^2\\
u&=(\p_{\circ}-\mathring\Delta)^{-1}(f,u_{0}),
&
\normltqt{f}^2
+\norm{\na u_{0}}_{\ltom}^2
&=\norm{\p_{\circ}u}^2_{\ltqt}
+\norm{\Delta u}^2_{\ltqt}
+\norm{\na u(T,\,\cdot\,)}_{\ltom}^2
\end{align*}

For simplicity, in this paper we restrict our results to real valued cases,
homogenous Dirichlet boundary conditions, and homogeneous isotropic material coefficients equal to $1$.
Our results can be easily extended to more general cases (complex valued functions and vector fields,
inhomogeneous boundary conditions of different type, inhomogeneous and anisotropic material laws)
and even to a general operator setting considering $\A^{*}\A$ instead of $-\mathring\Delta=-\div\mathring\na$
or $-\Delta=-\mathring\div\na$ for some densely defined and closed linear operator $\A$. 
As an example our results hold for eddy current problems, e.g., for 
$$\p_{\circ}+\text{curl}\,\mathring{\text{curl}}$$ 
in the simplest form. Our method extends also in a nice way to general first order systems, 
which we will treat in a forthcoming contribution.

%====================================================================
%====================================================================

\section{Time-Independent Elliptic Model Problems: Laplace-Problems} \label{sec:STATIC}

For time-independent problems we denote by $\om\subset\reals^d$, $d\geq1$, an arbitrary (bounded or unbounded) domain, and denote by $\scpltom{\,\cdot\,}{\,\cdot\,}$ resp. $\normltom{\,\cdot\,}$ the inner product resp. norm 
for scalar functions or vector fields in $\ltom$. We define the usual Sobolev spaces
\begin{align*}
	\hoom	& := \set{\varphi\in\ltom}{\na\varphi\in\ltom} , &
	\dom	& := \set{\psi\in\ltom}{\div\psi\in\ltom} , \\
& &	\dzom 	& := \set{\psi\in\dom}{\div\psi=0} ,
\end{align*}
where also the notations $\dom=\mathsf{H}(\div,\om)$ and $\dzom=\mathsf{H}(\div0,\om)$ can be found in the literature.
These are Hilbert spaces equipped with the respective graph norms $\norm{\,\cdot\,}_{\hoom}$, $\norm{\,\cdot\,}_{\dom}$. The space of functions belonging to $\hoom$ and vanishing on the boundary $\p\om$ is defined as the closure of smooth and compactly supported test functions
\begin{equation*}
	\hogaom:=\ol{\cigaom}^{\hoom} ,
\end{equation*}
and hence no regularity assumption on $\om$ resp. $\p\om$ is needed for its definition.
By approximation and continuity we immediately have the rule of partial integration
\begin{equation} \label{eq:partint}
	\forall\,\varphi\in\hogaom
	\quad
	\forall\,\psi\in\dom
	\qquad
	\scpltom{\na\varphi}{\psi}{}=-\scpltom{\varphi}{\div\psi}{} .
\end{equation}
We also define the spaces
\begin{align*}
	\Vom	& := \set{\varphi\in\hoom}{\na\varphi\in\dom} = \set{\varphi\in\hoom}{\Delta\varphi\in\ltom} , \\
	\Vgaom	& := \Vom\cap\hogaom ,
\end{align*}
which are Hilbert spaces with the norm defined by
$$\norm{\cdot}_{\Vom}^2 
:= \norm{\cdot}_{\hoom}^2 + \norm{\na\cdot}_{\dom}^2
=\norm{\cdot}_{\ltom}^2 + 2\norm{\na\cdot}_{\ltom}^2+\norm{\Delta\cdot}_{\ltom}^2.$$ 
If the domain is bounded (at least in one direction, i.e., it lies between two parallel hyperplanes), we have the Friedrichs inequality
\begin{equation} \label{eq:Cf}
	\forall \varphi \in \hogaom \qquad
	\normltom{\varphi} \leq \cf \normltom{\na \varphi} ,
\end{equation}
where $\cf>0$ denotes the Friedrichs constant. We note that generally the exact value of the Friedrichs constant is unknown, but it is easy to estimate it from above, see, e.g., \cite[Poincar\'e's estimate I, p. 25]{leisbook}.

%====================================================================

\subsection{Reaction-Diffusion ($-\Delta+1$)}
\label{ssec:RD}

In this section we do not need \eqref{eq:Cf} to hold, i.e., the domain is arbitrary. 
A simple reaction-diffusion problem in mixed form consists 
of finding a scalar potential $u\in\hogaom$ and a flux $p\in\dom$ such that
\begin{equation} \label{eq:rd_pde}
%\left\{
\begin{array}{r@{$\;$}c@{$\;$}l l}
-\na u +	p & = & 0 & \quad \textrm{in } \om , \\
	-\div p + u & = & f & \quad \textrm{in } \om , 
%	\\
%	u & = & 0 & \quad \textrm{on } \p\om ,
\end{array}
%\right.
\end{equation}
where the source $f$ belongs to $\ltom$. 
The variables $u$ and $p$ are often called primal and dual variable, respectively. Problem \eqref{eq:rd_pde} has a unique solution 
$u=(-\mathring\Delta+1)^{-1}f$ and $p:=\na u$,
which can also be found by Riesz' representation theorem.

\begin{lem}[isometry]
\label{lem:rd_isometry}
The solution operator
\begin{equation*}
	\sop : \ltom \to \Vgaom ; f \mapsto u
\end{equation*}
related to the problem \eqref{eq:rd_pde} is an isometry, i.e., 
$\norm{u}^2_{\hoom}+\norm{\na u}^2_{\dom}=\normltom{f}^2$
or simply $\norm{\sop}=1$.
\end{lem}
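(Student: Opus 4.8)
The plan is to collapse the mixed system \eqref{eq:rd_pde} into a single scalar identity and then expand one square. Eliminating the dual variable via $p=\na u$, the second equation of \eqref{eq:rd_pde} becomes $-\div\na u+u=f$, i.e.
\begin{equation*}
	f = u - \Delta u \quad\text{in } \om .
\end{equation*}
Before computing, I would record that the functional framework makes every norm below finite and the statement meaningful: by construction $u\in\hogaom$ and $\na u=p\in\dom$, so $\Delta u=\div\na u\in\ltom$ and hence $u\in\Vgaom=\Vom\cap\hogaom$, which is the target space of $\sop$.

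The core of the argument is then to expand the $\ltom$-norm of the right-hand side,
\begin{equation*}
	\normltom{f}^2 = \normltom{u-\Delta u}^2 = \normltom{u}^2 - 2\scpltom{u}{\Delta u} + \normltom{\Delta u}^2 ,
\end{equation*}
and to rewrite the cross term by partial integration. Applying \eqref{eq:partint} with the admissible pair $\varphi=u\in\hogaom$ and $\psi=\na u\in\dom$ gives
\begin{equation*}
	-\scpltom{u}{\Delta u} = -\scpltom{u}{\div\na u} = \scpltom{\na u}{\na u} = \normltom{\na u}^2 .
\end{equation*}
Substituting this back yields
\begin{equation*}
	\normltom{f}^2 = \normltom{\Delta u}^2 + 2\normltom{\na u}^2 + \normltom{u}^2 ,
\end{equation*}
which is precisely $\norm{u}_{\Vom}^2=\norm{u}_{\hoom}^2+\norm{\na u}_{\dom}^2$ by the definition of the $\Vom$-norm. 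This is the asserted norm equality, and since it holds for every $f\in\ltom$ it gives $\norm{\sop}=1$.

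The computation itself is entirely elementary, so the only place where genuine care is needed is the justification of the integration by parts. It is legitimate exactly because the setting guarantees $u\in\hogaom$ (this is where the homogeneous Dirichlet condition enters, via the definition of $\hogaom$ as the closure of $\cigaom$) together with $\na u\in\dom$; these two memberships are what allow \eqref{eq:partint} to be invoked with no boundary contribution and no extra regularity hypothesis on $\p\om$. I expect this validity check — rather than any algebra — to be the substantive point of the proof; it is also the step that will have to be reproduced, with the appropriate integration-by-parts rule, in the analogous arguments for the remaining three model problems.
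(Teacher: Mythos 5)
Your proposal is correct and is essentially the paper's own proof: both expand $\normltom{f}^2=\normltom{u-\Delta u}^2$ and convert the cross term via $-\scpltom{\Delta u}{u}=\normltom{\na u}^2$ using the partial integration rule \eqref{eq:partint}. Your explicit verification that $u\in\Vgaom$ and that the pair $(u,\na u)\in\hogaom\times\dom$ legitimizes \eqref{eq:partint} is a detail the paper leaves implicit, but it is not a different argument.
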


\begin{proof}
We simply compute
\begin{align*}
	\normltom{f}^2 = \normltom{-\Delta u + u}^2
	& = \normltom{\Delta u}^2 + \normltom{u}^2 - 2\scpltom{\Delta u}{u} % \\
%	& = \normltom{\Delta u}^2 + \normltom{u}^2 + 2\scpltom{\na u}{\na u}
	= \norm{u}^2_{\hoom} + \norm{\na u}^2_{\dom} 
\end{align*}
by $-\scpltom{\Delta u}{u}=\scpltom{\na u}{\na u}$, finishing the proof.
\end{proof}

This directly results in an error equality for sufficiently regular/conforming primal approximations.

\begin{theo}[error equality for very conforming primal approximations] \label{thm:rd_reg}
Let $u,\ut \in \Vgaom$ be the exact solution and an arbitrary approximation of the problem \eqref{eq:rd_pde}, respectively. Then
\begin{equation*}
	\norm{u-\ut}^2_{\hoom} + \norm{\na(u-\ut)}^2_{\dom}
	= \normltom{f - \ut + \Delta\ut}^2 .
\end{equation*}
\end{theo}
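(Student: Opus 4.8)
The plan is to reduce the claimed equality to the isometry of Lemma~\ref{lem:rd_isometry} by exploiting linearity. Since $u,\ut\in\Vgaom$ and this space is linear, the error $e:=u-\ut$ again belongs to $\Vgaom$; in particular $e\in\hogaom$ and $\Delta e\in\ltom$.

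First I would identify the source to which $e$ is the exact solution. By linearity of the operator $-\Delta+1$ together with the defining equation $-\Delta u+u=f$,
\[
-\Delta e+e=(-\Delta u+u)-(-\Delta\ut+\ut)=f-\ut+\Delta\ut=:\ft\in\ltom,
\]
so that $e=\sop\ft$ is precisely the exact solution associated with the modified source $\ft$. Applying the isometry of Lemma~\ref{lem:rd_isometry} to $e=\sop\ft$ then gives at once
\[
\norm{e}^2_{\hoom}+\norm{\na e}^2_{\dom}=\normltom{\ft}^2=\normltom{f-\ut+\Delta\ut}^2,
\]
which is exactly the asserted error equality. Equivalently, one may bypass the solution operator and simply rerun the computation of Lemma~\ref{lem:rd_isometry} with $u$ and $f$ replaced by $e$ and $\ft$, the crucial step being the partial integration $-\scpltom{\Delta e}{e}=\normltom{\na e}^2$.

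The only point demanding attention---and the reason for the ``very conforming'' hypothesis $\ut\in\Vgaom$---is the regularity bookkeeping: we need $\Delta\ut\in\ltom$ so that $\ft$ is a genuine $\ltom$ function, and we need $e\in\hogaom$ so that the partial integration~\eqref{eq:partint} underlying Lemma~\ref{lem:rd_isometry} is legitimate. Beyond verifying these memberships there is no analytical obstacle; the theorem is in essence a corollary of linearity combined with the isometry.
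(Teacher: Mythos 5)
Your proof is correct and follows essentially the same route as the paper: both arguments note that the error $u-\ut$ (equivalently, that $\ut$ itself solves the problem with data $\ft:=-\Delta\ut+\ut$) lies in the range of the linear solution operator $\sop$ and then invoke the isometry of Lemma~\ref{lem:rd_isometry}. Your regularity remarks on why $\ut\in\Vgaom$ is needed are accurate and consistent with the paper's intent.
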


\begin{proof}
Since $\ut\in\Vgaom$ is very regular, it is the exact solution of $\ut\in\hogaom$ and
\begin{equation*}
%\left\{
\begin{array}{r@{$\;$}c@{$\;$}l l}
	-\Delta\ut + \ut & =: & \ft & \quad \textrm{in } \om , 
%	\\
%	\ut & = & 0 & \quad \textrm{on } \p\om ,
\end{array}	
%\right.
\end{equation*}
i.e., we have $\sop(\ft) = \ut$. Because $\sop$ is linear we then have $\sop(f-\ft) = u-\ut$, 
and because it is an isometry as well,
%(see Lemma \ref{lem:rd_isometry}) 
we directly have
\begin{equation*}
	\norm{u-\ut}^2_{\hoom} + \norm{\na(u-\ut)}^2_{\dom} = \normltom{f-\ft}^2 .
\end{equation*}
\end{proof}

This error equality has limited applicability due to the high regularity requirement. The next theorem holds for mixed approximations where both the primal and dual variable are conforming.

\begin{theo}[error equality for conforming mixed approximations]
\label{thm:rd}
Let $(u,p),(\ut,\pt) \in \hogaom\times\dom$ be the exact solution pair and an arbitrary approximation pair of the problem \eqref{eq:rd_pde}, respectively. Then
\begin{equation*}
	\norm{u-\ut}_{\hoom}^2 + \norm{p-\pt}_{\dom}^2
	= \normltom{f - \ut + \div\pt}^2 + \normltom{\pt - \na\ut}^2 .
\end{equation*}
\end{theo}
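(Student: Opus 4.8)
The plan is to expand the two squared $\lebesgue^2$-norms on the right-hand side, express everything in terms of the error functions $e_u := u - \ut$ and $e_p := p - \pt$, and use the defining equations of the exact solution together with the partial integration rule \eqref{eq:partint} to cancel cross terms. Since $(u,p)$ solves \eqref{eq:rd_pde}, we have $f = -\div p + u$ and $p = \na u$. I would substitute these into the two residual expressions on the right, rewriting $f - \ut + \div\pt = \div(\pt - p) + (u - \ut) = \div(\pt - p) + e_u$ and $\pt - \na\ut = (\pt - p) + \na(u - \ut) = -e_p + \na e_u$ (using $p = \na u$).

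The main work is then purely algebraic. I would write
\begin{align*}
	\normltom{f - \ut + \div\pt}^2 + \normltom{\pt - \na\ut}^2
	&= \normltom{e_u - \div e_p}^2 + \normltom{\na e_u - e_p}^2 ,
\end{align*}
expand both squares, and collect terms. The diagonal terms immediately reassemble the left-hand side: $\normltom{e_u}^2 + \normltom{\div e_p}^2 + \normltom{\na e_u}^2 + \normltom{e_p}^2$ equals $\norm{e_u}_{\hoom}^2 + \norm{e_p}_{\dom}^2$ by the definitions of the graph norms. What remains is to show the two cross terms $-2\scpltom{e_u}{\div e_p}$ and $-2\scpltom{\na e_u}{e_p}$ cancel each other.

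This cancellation is the one step where the hypotheses genuinely enter, and I expect it to be the crux. Since $e_u = u - \ut \in \hogaom$ (both $u$ and $\ut$ lie in $\hogaom$) and $e_p = p - \pt \in \dom$ (both in $\dom$), the partial integration rule \eqref{eq:partint} applies and gives $\scpltom{\na e_u}{e_p} = -\scpltom{e_u}{\div e_p}$. Hence $-2\scpltom{\na e_u}{e_p} = +2\scpltom{e_u}{\div e_p}$, which exactly cancels the other cross term. I would note that this is precisely why the theorem requires \emph{both} the primal approximation $\ut$ to be conforming in $\hogaom$ and the dual approximation $\pt$ to be conforming in $\dom$: without $e_u \in \hogaom$ the boundary term in partial integration would not vanish, and without $e_p \in \dom$ the divergence $\div e_p$ would not be an $\lebesgue^2$-function at all. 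After the cross terms cancel, collecting the surviving diagonal terms finishes the proof.
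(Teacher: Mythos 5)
Your proposal is correct and follows essentially the same route as the paper's proof: rewrite the residuals as $u-\ut+\div(\pt-p)$ and $\pt-p+\na(u-\ut)$ using the exact equations, expand the squares, and cancel the two cross terms via the partial integration rule \eqref{eq:partint}, which applies precisely because $u-\ut\in\hogaom$ and $p-\pt\in\dom$. Your closing observation about where conformity enters even anticipates the paper's Remark \ref{remthm:rd}(ii), which notes that only $u-\ut\in\hogaom$ is actually needed.
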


\begin{proof}
By the second equation of \eqref{eq:rd_pde} and inserting $0=\na u - p$ we obtain
\begin{align*}
	\normltom{f-\ut+\div\pt}^2 + \normltom{\pt-\na\ut}^2
	& = \normltom{u-\ut + \div(\pt-p)}^2 + \normltom{\pt-p + \na(u-\ut)}^2 \\
	& = \normltom{u-\ut}^2 + \normltom{\div(\pt-p)}^2
		+ 2\scpltom{u-\ut}{\div(\pt-p)} \\
	& \qquad + \normltom{\pt-p}^2 + \normltom{\na(u-\ut)}^2
		+ 2\scpltom{\pt-p}{\na(u-\ut)} \\
	& = \norm{u-\ut}_{\hoom}^2 + \norm{p-\pt}_{\dom}^2 ,
\end{align*}
since by \eqref{eq:partint} the two cross terms cancel each other.
\end{proof}

\begin{rem}
\label{remthm:rd}
We note:
\begin{itemize}
\item[\bf(i)]
The error equality of Theorem \ref{thm:rd_reg} 
is a special case of Theorem \ref{thm:rd} with $\ut\in\Vgaom$ and $\pt=\na\ut$.
\item[\bf(ii)]
Looking at the proof, in Theorem \ref{thm:rd} is it sufficient that
the difference $u-\ut$ satisfies the boundary condition, i.e.,
$(\ut,\pt) \in \hoom\times\dom$ with $u-\ut\in\hogaom$,
which immediately extends the results for problems with inhomogeneous Dirichlet boundary conditions
as long as the approximation $\ut$ satisfies the boundary condition exactly.
\end{itemize}
\end{rem}

\begin{rem}[upper bounds for conforming primal approximations]
\label{rem2thm:rd}
Let $\ut\in\hogaom$. Choosing $\pt=\phi\in\dom$ in the latter theorem we obtain
$$\norm{u-\ut}_{\hoom}^2
\leq	\norm{u-\ut}_{\hoom}^2+\norm{p-\phi}_{\dom}^2
=\normltom{f-\ut+\div\phi}^2+\normltom{\phi-\na\ut}^2,$$
which - without the term in the middle - is the well known conforming error estimate for the reaction-diffusion problem,
see, e.g., \cite[(4.2.11)]{repinbookone}. Note that for $\pt\in\dom$ and choosing $\ut=\varphi\in\hogaom$ 
in the latter theorem we obtain
$$	\norm{p-\pt}_{\dom}^2
\leq	\norm{u-\varphi}_{\hoom}^2+\norm{p-\pt}_{\dom}^2	
=\normltom{f-\varphi+\div\pt}^2+\normltom{\pt-\na\varphi}^2.$$
\end{rem}

The next results involving non-conforming approximations are all simple applications 
of Theorem \ref{thm:rd}. 
%and the following two trivial inequalities
%involving the exact solution pair $(u,p)$:
%\begin{equation} \label{eq:low1}
%	\normltom{f-\varphi+\div\phi}^2
%	= \normltom{u-\varphi+\div(\phi-p)}^2
%	\leq 2 \big(\normltom{u-\varphi}^2 + \normltom{\div(p-\phi)}^2\big),
%\end{equation}
%which holds for all $(\varphi,\phi)\in\ltom\times\dom$, and
%\begin{equation} \label{eq:low2}
%	\normltom{\phi-\na\varphi}^2
%	= \normltom{\phi-p+\na(u-\varphi)}^2
%	\leq 2\big(\normltom{p-\phi}^2 + \normltom{\na(u-\varphi)}^2\big),
%\end{equation}
%which holds for all $(\varphi,\phi)\in\hoom\times\ltom$. 
%%These follow directly from the problem formulation \eqref{eq:rd_pde}.

\begin{theo}[upper bounds for non-conforming approximations]%[Error estimates for non-conforming approximations]
\label{thm:rd_nc}
Let $(u,p) \in \hogaom\times\dom$ be the exact solution pair 
and let $(\ut,\pt)\in\ltom\times\ltom$ be an arbitrary approximation pair 
of the problem \eqref{eq:rd_pde}, respectively. Then
%\begin{equation*}
%\begin{array}{l@{$\quad$}l}
%	\textrm{\bf(i)} & \displaystyle
%	\normltom{u-\ut}^2
%	\leq \big(1+\frac{1}{\gamma}\big) \big(
%		\normltom{f-\varphi+\div\phi}^2
%		+ \frac{1}{2} \normltom{\phi-\na\varphi}^2 \big)
%		+ \brac{1+\gamma} \normltom{\varphi-\ut}^2,
%\\[1em]
%	\textrm{\bf(ii)} & \displaystyle
%		\normltom{p-\pt}^2
%	\leq \big(1+\frac{1}{\gamma}\big) \big(
%		\frac{1}{2}\normltom{f-\varphi+\div\phi}^2
%		+ \normltom{\phi-\na\varphi}^2 \big)
%		+ \brac{1+\gamma} \normltom{\phi-\pt}^2,
%\\[1em]
%	\textrm{\bf(iii)} &
%	\begin{array}{l@{$\;$}l} 
%		\normltom{u-\ut}^2 + \normltom{p-\pt}^2
%		\leq & \displaystyle \big(1+\frac{1}{\gamma}\big) \big(
%	      \normltom{f-\varphi+\div\phi}^2
%	      + \normltom{\phi-\na\varphi}^2 \big) \\[1em]
%		& + \brac{1+\gamma} \big( \normltom{\varphi-\ut}^2
%		  + \normltom{\phi-\pt}^2 \big)
%	\end{array}
%\end{array}	
%\end{equation*}
\begin{align*}
\textrm{\bf(i)}
&
& 
\normltom{u-\ut}^2
&\leq\big(1+\frac{1}{\gamma}\big) 
\big(\normltom{f-\varphi+\div\phi}^2
+\frac{1}{2}\normltom{\phi-\na\varphi}^2\big)
+\brac{1+\gamma} \normltom{\varphi-\ut}^2,\\
\textrm{\bf(ii)}
&
&
\normltom{p-\pt}^2
&\leq\big(1+\frac{1}{\gamma}\big) 
\big(\frac{1}{2}\normltom{f-\varphi+\div\phi}^2
+\normltom{\phi-\na\varphi}^2\big)
+\brac{1+\gamma}\normltom{\phi-\pt}^2,\\
\textrm{\bf(iii)} 
&
&
\normltom{u-\ut}^2
+\normltom{p-\pt}^2
&\leq\big(1+\frac{1}{\gamma}\big)
\big(\normltom{f-\varphi+\div\phi}^2
+\normltom{\phi-\na\varphi}^2\big)\\
&&	& 
\hspace*{10mm}+\brac{1+\gamma}
\big(\normltom{\varphi-\ut}^2
+\normltom{\phi-\pt}^2\big)
\end{align*}
hold for arbitrary $(\varphi,\phi)\in\hogaom\times\dom$ and every $\gamma>0$.
\end{theo}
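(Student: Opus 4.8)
The plan is to separate each error into a \emph{conforming part}, measured between the exact solution and the auxiliary conforming pair $(\varphi,\phi)\in\hogaom\times\dom$, and a purely $\ltom$ \emph{distance part} between $(\varphi,\phi)$ and the non-conforming approximation $(\ut,\pt)\in\ltom\times\ltom$. First I would apply the triangle inequality followed by Young's inequality in the form $(a+b)^2\leq(1+\tfrac1\gamma)a^2+(1+\gamma)b^2$ to obtain, for case \textbf{(i)},
$$\normltom{u-\ut}^2\leq\big(1+\tfrac1\gamma\big)\normltom{u-\varphi}^2+(1+\gamma)\normltom{\varphi-\ut}^2,$$
and analogously for $\normltom{p-\pt}^2$ in \textbf{(ii)} and for the sum in \textbf{(iii)}. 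This already produces the correct $(1+\gamma)$-weighted distance terms; what remains is to control the conforming quantities $\normltom{u-\varphi}^2$ and $\normltom{p-\phi}^2$ by the two residuals on the right-hand sides.

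For the combined estimate \textbf{(iii)} this last step is immediate: since $\varphi\in\hogaom$ and $\phi\in\dom$ are conforming, Theorem \ref{thm:rd} applied to the approximation $(\varphi,\phi)$ gives
$$\normltom{u-\varphi}^2+\normltom{p-\phi}^2\leq\norm{u-\varphi}_{\hoom}^2+\norm{p-\phi}_{\dom}^2=\normltom{f-\varphi+\div\phi}^2+\normltom{\phi-\na\varphi}^2,$$
so the full $(1+\tfrac1\gamma)$-weighted residual block drops out after simply discarding the nonnegative gradient- and divergence-error contributions. The separate estimates \textbf{(i)} and \textbf{(ii)} are sharper, since they carry the factor $\tfrac12$ in front of one residual each, and hence cannot be obtained by merely throwing away those nonnegative terms; they require a finer decomposition.

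The heart of the matter, and the step I expect to be the main obstacle, is therefore the pair of refined conforming bounds
$$\normltom{u-\varphi}^2\leq\normltom{f-\varphi+\div\phi}^2+\tfrac12\normltom{\phi-\na\varphi}^2,\qquad\normltom{p-\phi}^2\leq\tfrac12\normltom{f-\varphi+\div\phi}^2+\normltom{\phi-\na\varphi}^2.$$
To establish them I would reuse the identities from the proof of Theorem \ref{thm:rd}: inserting the two equations of \eqref{eq:rd_pde} yields $f-\varphi+\div\phi=(u-\varphi)+\div(\phi-p)$ and $\phi-\na\varphi=(\phi-p)+\na(u-\varphi)$, while \eqref{eq:partint} shows that the single mixed product $X:=\scpltom{u-\varphi}{\div(\phi-p)}$ equals $-\scpltom{\na(u-\varphi)}{\phi-p}$. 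Expanding both residual squares, the difference $\normltom{f-\varphi+\div\phi}^2+\tfrac12\normltom{\phi-\na\varphi}^2-\normltom{u-\varphi}^2$ collapses to $\normltom{\div(\phi-p)}^2+X+\tfrac12\normltom{\phi-p}^2+\tfrac12\normltom{\na(u-\varphi)}^2$. The only delicate point is the indefinite sign of $X$: bounding $|X|\leq\normltom{\na(u-\varphi)}\,\normltom{\phi-p}$ by Cauchy--Schwarz and completing the square turns this expression into $\normltom{\div(\phi-p)}^2+\tfrac12\big(\normltom{\na(u-\varphi)}-\normltom{\phi-p}\big)^2\geq0$, giving the first bound; using instead $|X|\leq\normltom{u-\varphi}\,\normltom{\div(\phi-p)}$ and completing the square symmetrically gives the second. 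Substituting these two bounds into the Young-split inequalities from the first step then finishes \textbf{(i)} and \textbf{(ii)}.
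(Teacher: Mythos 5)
Your proposal is correct and matches the paper's proof in all essentials: the paper likewise Young-splits each error via the conforming pair $(\varphi,\phi)$, handles \textbf{(iii)} by applying Theorem \ref{thm:rd} to $(\varphi,\phi)$ directly, and controls the conforming parts in \textbf{(i)} and \textbf{(ii)} by exactly your two refined bounds, which appear there as \eqref{eq:nc1} and \eqref{eq:nc2} (with the additional nonnegative terms $\normltom{\div(p-\phi)}^2$ and $\normltom{\na(u-\varphi)}^2$ retained on the left, though they are discarded in the end anyway). The only cosmetic difference is how those intermediate bounds are obtained: the paper rearranges the error equality of Theorem \ref{thm:rd} using the triangle-inequality estimates \eqref{eq:low1}--\eqref{eq:low2}, whereas you expand the residual squares directly and complete the square on the cross term $X=\scpltom{u-\varphi}{\div(\phi-p)}$ --- an equivalent elementary computation.
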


\begin{proof}
For all $(\varphi,\phi)\in\ltom\times\dom$ we have
\begin{equation} \label{eq:low1}
	\normltom{f-\varphi+\div\phi}^2
	= \normltom{u-\varphi+\div(\phi-p)}^2
	\leq 2 \big(\normltom{u-\varphi}^2 + \normltom{\div(p-\phi)}^2\big)
\end{equation}
and for all $(\varphi,\phi)\in\hoom\times\ltom$
\begin{equation} \label{eq:low2}
	\normltom{\phi-\na\varphi}^2
	= \normltom{\phi-p+\na(u-\varphi)}^2
	\leq 2\big(\normltom{p-\phi}^2 + \normltom{\na(u-\varphi)}^2\big).
\end{equation}
Applying \eqref{eq:low1} and \eqref{eq:low2} in the form
\begin{align*}
\normltom{u-\varphi}^2 + \normltom{\div(p-\phi)}^2
&\geq\frac{1}{2}	\normltom{f-\varphi+\div\phi}^2,\\
\normltom{p-\phi}^2 + \normltom{\na(u-\varphi)}^2
&\geq\frac{1}{2}	\normltom{\phi-\na\varphi}^2
\end{align*}
to the left hand side of the equation in Theorem \ref{thm:rd} with $(\ut,\pt)=(\varphi,\phi)$ we obtain
\begin{align}
	\normltom{u-\varphi}^2 + 	\normltom{\div(p-\phi)}^2
	& \leq \normltom{f-\varphi+\div\phi}^2
		+ \frac{1}{2} \normltom{\phi-\na\varphi}^2 , \label{eq:nc1} \\
	\normltom{\na(u-\varphi)}^2 + 	\normltom{p-\phi}^2
	& \leq \frac{1}{2} \normltom{f-\varphi+\div\phi}^2
		+ \normltom{\phi-\na\varphi}^2 . \label{eq:nc2}
\end{align}
Parts (i) and (ii) then follow from applying \eqref{eq:nc1} and \eqref{eq:nc2} to
\begin{align*}
	\normltom{u-\ut}^2
	& \leq \big(1+\frac{1}{\gamma}\big) \normltom{u-\varphi}^2
		+ \brac{1+\gamma} \normltom{\varphi-\ut}^2 , \\
	\normltom{p-\pt}^2
	& \leq \big(1+\frac{1}{\gamma}\big) \normltom{p-\phi}^2
		+ \brac{1+\gamma} \normltom{\phi-\pt}^2 ,
\end{align*}
respectively. Part (iii) follows similarly from
\begin{equation*}
	\normltom{u-\ut}^2 + \normltom{p-\pt}^2
	\leq \big(1+\frac{1}{\gamma}\big) \big( \normltom{u-\varphi}^2 +
		\normltom{p-\phi}^2\big)
		+ \brac{1+\gamma} \big( \normltom{\varphi-\ut}^2 + \normltom{\phi-\pt}^2 \big)
\end{equation*}
and estimating the first two norms on the right hand side from above using Theorem \ref{thm:rd} with $(\ut,\pt) = (\varphi,\phi)$.
\end{proof}

\begin{rem}
Parts (i) and (ii) of Theorem \ref{thm:rd_nc} without the factor $1/2$ 
follow directly from (iii) by setting $\pt:=\phi$ and $\ut:=\varphi$.
\end{rem}

Specializing $\varphi:=\ut$ and $\phi:=\pt$,
Theorem \ref{thm:rd_nc} (iii) implies upper bounds for the primal-dual and dual-primal 
mixed approximations we well.

\begin{cor}[upper bounds for semi-conforming approximations]%[Error estimates for semi-conforming approximations]
\label{cor:rd_nc}
\mbox{}
\begin{itemize}
\item[\bf(i)] For $(\ut,\pt)\in\hogaom\times\ltom$ we have
\begin{equation*}
	\normltom{u-\ut}^2 + \normltom{p-\pt}^2
	\leq \big(1+\frac{1}{\gamma}\big) \big(
		\normltom{f-\ut+\div\phi}^2
		+ \normltom{\phi-\na\ut}^2 \big)
		+ \brac{1+\gamma} \normltom{\phi-\pt}^2
\end{equation*}
for an arbitrary $\phi\in\dom$ and $\gamma>0$.
\item[\bf(ii)] For $(\ut,\pt)\in\ltom\times\dom$ we have
\begin{equation*}
	\normltom{u-\ut}^2 + \normltom{p-\pt}^2
	\leq \big(1+\frac{1}{\gamma}\big) \big(
		\normltom{f-\varphi+\div\pt}^2
		+ \normltom{\pt-\na\varphi}^2 \big)
		+ \brac{1+\gamma} \normltom{\varphi-\ut}^2
\end{equation*}
for an arbitrary $\varphi\in\hogaom$ and $\gamma>0$.
\end{itemize}
\end{cor}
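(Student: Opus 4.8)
The plan is to read both parts off \emph{directly} from Theorem \ref{thm:rd_nc} (iii), whose conclusion
\begin{equation*}
	\normltom{u-\ut}^2 + \normltom{p-\pt}^2
	\leq \big(1+\tfrac{1}{\gamma}\big)\big(\normltom{f-\varphi+\div\phi}^2 + \normltom{\phi-\na\varphi}^2\big)
	+ \brac{1+\gamma}\big(\normltom{\varphi-\ut}^2 + \normltom{\phi-\pt}^2\big)
\end{equation*}
already carries two free ``auxiliary'' arguments $(\varphi,\phi)\in\hogaom\times\dom$. The idea is that whenever one of the components of the approximation pair happens to lie in the corresponding auxiliary space, I can collapse that slot onto the approximation itself and annihilate one of the two $\brac{1+\gamma}$-weighted remainder terms.

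For part (i) I would invoke Theorem \ref{thm:rd_nc} (iii) with the choice $\varphi:=\ut$. This is admissible precisely because the hypothesis places $\ut\in\hogaom$, which is exactly the space over which $\varphi$ ranges in Theorem \ref{thm:rd_nc}. Then $\normltom{\varphi-\ut}^2=0$, while $\phi\in\dom$ remains arbitrary, so the inequality reduces \emph{verbatim} to the asserted bound with free $\phi\in\dom$. For part (ii) I would make the symmetric move and set $\phi:=\pt$ in (iii), legitimate because $\pt$ is now assumed to lie in $\dom$; here the term $\normltom{\phi-\pt}^2=0$ drops out and $\varphi\in\hogaom$ stays free, giving the claimed estimate on the nose.

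There is essentially no obstacle to overcome: the entire content is the observation that the membership assumptions $(\ut,\pt)\in\hogaom\times\ltom$ in (i) and $(\ut,\pt)\in\ltom\times\dom$ in (ii) are exactly what is needed to make the respective specialization of one auxiliary variable \emph{admissible}. The only point worth stating explicitly is this compatibility check between the hypothesis spaces and the ranges of $\varphi,\phi$ in Theorem \ref{thm:rd_nc}; once that is noted, both inequalities follow by cancellation of the vanishing remainder, with no further estimation.
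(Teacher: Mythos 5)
Your proof is correct and is precisely the paper's own argument: the text preceding the corollary states that it follows from Theorem \ref{thm:rd_nc} (iii) by specializing $\varphi:=\ut$ in part (i) and $\phi:=\pt$ in part (ii), exactly as you do. Your explicit admissibility check ($\ut\in\hogaom$ resp. $\pt\in\dom$ matching the ranges of the auxiliary variables) is the only substantive point, and you state it correctly.
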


The results from Corollary \ref{cor:rd_nc} can be improved:

\begin{cor}[upper and lower bounds for semi-conforming approximations]
\mbox{}
\begin{itemize}
\item[\bf(i)] For $(\ut,\pt)\in\hogaom\times\ltom$ we have
\begin{align*}
	\frac{1}{2} \normltom{\pt-\na\ut}^2
	& \leq \norm{u-\ut}_{\hoom}^2 + \normltom{p-\pt}^2 \\
	& \leq \big(1+\frac{1}{2\gamma}\big) \normltom{f-\ut+\div\phi}^2
		+ \big(1+\frac{1}{\gamma}\big) \normltom{\phi-\na\ut}^2
		+ \brac{1+\gamma} \normltom{\phi-\pt}^2
\end{align*}
for an arbitrary $\phi\in\dom$ and $\gamma>0$.
\item[\bf(ii)] For $(\ut,\pt)\in\ltom\times\dom$ we have
\begin{align*}
	\frac{1}{2} \normltom{f-\ut+\div\pt}^2
	& \leq \normltom{u-\ut}^2 + \norm{p-\pt}_{\dom}^2 \\
	& \leq \big(1+\frac{1}{\gamma}\big) \normltom{f-\varphi+\div\pt}^2
		+ \big(1+\frac{1}{2\gamma}\big) \normltom{\pt-\na\varphi}^2
		+ \brac{1+\gamma} \normltom{\varphi-\ut}^2
\end{align*}
for an arbitrary $\varphi\in\hogaom$ and $\gamma>0$.
\end{itemize}
\end{cor}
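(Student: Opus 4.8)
The plan is to reduce everything to Theorem~\ref{thm:rd} together with the one-sided estimates \eqref{eq:low1}, \eqref{eq:low2}, \eqref{eq:nc1}, and \eqref{eq:nc2} already established, and to treat parts (i) and (ii) as mirror images of each other under the exchange of the primal and dual variables. Each part has an easy lower bound and a more delicate upper bound.

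For the lower bound in (i) I would simply invoke \eqref{eq:low2} with $\varphi=\ut\in\hoom$ and $\phi=\pt\in\ltom$, which gives $\normltom{\pt-\na\ut}^2\leq2\brac{\normltom{p-\pt}^2+\normltom{\na(u-\ut)}^2}$; dividing by $2$ and using $\normltom{\na(u-\ut)}^2\leq\norm{u-\ut}_{\hoom}^2$ yields the claim at once. The lower bound in (ii) is entirely analogous, using \eqref{eq:low1} with $\varphi=\ut\in\ltom$, $\phi=\pt\in\dom$ and $\normltom{\div(p-\pt)}^2\leq\norm{p-\pt}_{\dom}^2$.

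For the upper bound in (i) the first move is to peel off the non-conforming dual part by Young's inequality, $\normltom{p-\pt}^2\leq\brac{1+\frac{1}{\gamma}}\normltom{p-\phi}^2+\brac{1+\gamma}\normltom{\phi-\pt}^2$, for the arbitrary conforming $\phi\in\dom$. The step I expect to be the main obstacle is then bounding $\norm{u-\ut}_{\hoom}^2+\brac{1+\frac{1}{\gamma}}\normltom{p-\phi}^2$ while preserving the sharp asymmetric constants $\brac{1+\frac{1}{2\gamma}}$ and $\brac{1+\frac{1}{\gamma}}$: a naive application of \eqref{eq:nc1} and \eqref{eq:nc2} alone overshoots, producing the constant $\frac{3}{2}$ in front of both data terms, because those estimates already carry a factor-$2$ loss on both sides. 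The remedy is to split $\brac{1+\frac{1}{\gamma}}\normltom{p-\phi}^2=\normltom{p-\phi}^2+\frac{1}{\gamma}\normltom{p-\phi}^2$ and handle the two pieces differently: the unit-coefficient piece is absorbed into the \emph{exact} equality of Theorem~\ref{thm:rd} for the pair $(\ut,\phi)\in\hogaom\times\dom$, via $\norm{u-\ut}_{\hoom}^2+\normltom{p-\phi}^2\leq\norm{u-\ut}_{\hoom}^2+\norm{p-\phi}_{\dom}^2=\normltom{f-\ut+\div\phi}^2+\normltom{\phi-\na\ut}^2$, while the small correction is controlled by the sharper one-sided bound $\normltom{p-\phi}^2\leq\frac{1}{2}\normltom{f-\ut+\div\phi}^2+\normltom{\phi-\na\ut}^2$ read off from \eqref{eq:nc2}. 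Adding the two contributions gives exactly $\brac{1+\frac{1}{2\gamma}}\normltom{f-\ut+\div\phi}^2+\brac{1+\frac{1}{\gamma}}\normltom{\phi-\na\ut}^2$, and recombining with the Young split completes (i).

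The upper bound in (ii) follows the same recipe with primal and dual roles reversed: split off the non-conforming primal part by $\normltom{u-\ut}^2\leq\brac{1+\frac{1}{\gamma}}\normltom{u-\varphi}^2+\brac{1+\gamma}\normltom{\varphi-\ut}^2$ for $\varphi\in\hogaom$, write $\brac{1+\frac{1}{\gamma}}\normltom{u-\varphi}^2=\normltom{u-\varphi}^2+\frac{1}{\gamma}\normltom{u-\varphi}^2$, bound the unit-coefficient piece together with $\norm{p-\pt}_{\dom}^2$ by the exact equality of Theorem~\ref{thm:rd} applied to $(\varphi,\pt)\in\hogaom\times\dom$, and control the correction $\frac{1}{\gamma}\normltom{u-\varphi}^2$ by the sharper bound $\normltom{u-\varphi}^2\leq\normltom{f-\varphi+\div\pt}^2+\frac{1}{2}\normltom{\pt-\na\varphi}^2$ coming from \eqref{eq:nc1}. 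The asymmetry of the constants is thereby flipped, yielding $\brac{1+\frac{1}{\gamma}}\normltom{f-\varphi+\div\pt}^2+\brac{1+\frac{1}{2\gamma}}\normltom{\pt-\na\varphi}^2$, as required.
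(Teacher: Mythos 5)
Your proposal is correct and follows essentially the same route as the paper's own proof: lower bounds from the triangle-inequality estimates \eqref{eq:low1} and \eqref{eq:low2}, and upper bounds obtained by a Young split off the non-conforming variable, with the decomposition $\big(1+\frac{1}{\gamma}\big)=1+\frac{1}{\gamma}$ handled exactly as in the paper, the unit piece absorbed via Theorem~\ref{thm:rd} and the $\frac{1}{\gamma}$ correction controlled by \eqref{eq:nc2} resp.\ \eqref{eq:nc1}. Your write-up even makes explicit the choice $\varphi=\ut$ (resp.\ $\phi=\pt$) in the correction step, which the paper leaves implicit.
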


\begin{proof}
The lower bounds in (i) and (ii) follow immediately by the triangle inequality,
see \eqref{eq:low1} and \eqref{eq:low2}. 
The upper bound in (i) can be proved by introducing an arbitrary $\phi\in\dom$ 
and using the triangle inequality such that
\begin{equation*}
	\norm{u-\ut}_{\hoom}^2 + \normltom{p-\pt}^2
	\leq \norm{u-\ut}_{\hoom}^2
	+ \big(1+\frac{1}{\gamma}\big) \normltom{p-\phi}^2
	+ \brac{1+\gamma} \normltom{\phi-\pt}^2 .
\end{equation*}
The error equality of Theorem \ref{thm:rd} for the pair $(\ut,\phi)\in\hogaom\times\dom$ yields
\begin{equation*}
	\norm{u-\ut}_{\hoom}^2 + \normltom{p-\phi}^2
	\leq \normltom{f-\ut+\div\phi}^2 + \normltom{\phi-\na\ut}^2 
\end{equation*}
and the remaining unknown term can be estimated by \eqref{eq:nc2}, i.e.,
\begin{equation*}
	\frac{1}{\gamma} \normltom{p-\phi}^2
	\leq \frac{1}{2\gamma} \normltom{f-\varphi+\div\phi}^2
		+ \frac{1}{\gamma} \normltom{\phi-\na\varphi}^2 .
\end{equation*}
The upper bound in (ii) is shown analogously by using Theorem \ref{thm:rd} and the estimate \eqref{eq:nc1}.
\end{proof}

%====================================================================

\subsection{Laplace ($-\Delta$)}
\label{ssec:D}

For this section we assume that the domain $\om$ 
is bounded at least in one direction so that the Friedrichs inequality \eqref{eq:Cf} holds. 
The Poisson problem %diffusion problem 
in the mixed form consists of finding a scalar potential $u\in\hogaom$ and a flux $p\in\dom$ such that
\begin{equation} \label{eq:d_pde}
%\left\{
\begin{array}{r@{$\;$}c@{$\;$}l l}
-\na u + 	p & = & 0 & \quad \textrm{in } \om , \\
	-\div p & = & f & \quad \textrm{in } \om , 
%	\\
%	u & = & 0 & \quad \textrm{on } \p\om ,
\end{array}
%\right.
\end{equation}
where the source $f$ belongs to $\ltom$. 
%The problem \eqref{eq:d_pde} has a unique solution %by the Lax-Milgram Theorem.
%by Riesz' representation theorem.
Again, the variables $u$ and $p$ are often called primal and dual variable, respectively. 
As before, problem \eqref{eq:d_pde} has a unique solution 
$u=(-\mathring\Delta)^{-1}f$ and $p:=\na u$,
which can also be found by Riesz' representation theorem.
We define an alternative norm on $\Vgaom$ by $\normltom{\Delta\cdot}$.
Note that this is indeed a norm: By \eqref{eq:Cf} we have 
\begin{align} \label{eq:normequiv}
	& \normltom{\na\varphi}^2
%	= \scpltom{\na\varphi}{\na\varphi}
	= - \scpltom{\varphi}{\Delta\varphi}
	\leq \normltom{\varphi} \normltom{\Delta\varphi}
	\leq \cf \normltom{\na\varphi} \normltom{\Delta\varphi}
%	\\
%	\Rightarrow \quad &
%	\normltom{\na\varphi} \leq \cf \normltom{\Delta\varphi} . \nonumber
\end{align}
and hence
\begin{align}
\label{eq:Cftwo}
\forall\,\varphi\in\Vgaom\qquad
\normltom{\na\varphi}\leq\cf\normltom{\Delta\varphi}.
\end{align}
Together with \eqref{eq:Cf} the equivalence of $\norm{\cdot}_{\Vom}$ and $\normltom{\Delta\cdot}$ 
on $\Vgaom$ follows, i.e.,
\begin{align*}
\forall\,\varphi\in\Vgaom\qquad
\normltom{\Delta\varphi}^2
\leq\norm{\varphi}_{\Vom}^2
=\normltom{\varphi}^2
+2\normltom{\na\varphi}^2
+\normltom{\Delta\varphi}^2
\leq(1+\cf^2)^2\normltom{\Delta\varphi}^2.
\end{align*}
We also emphasize that \eqref{eq:Cftwo} immediately implies the 
following divergence estimate for irrotational vector fields
\begin{align}
\label{eq:Cfthree}
\forall\,\phi\in\na\hogaom\cap\dom\qquad
\normltom{\phi}\leq\cf\normltom{\div\phi}.
\end{align}

\begin{lem}[isometry]
\label{lem:d_isometry}
The solution operator
\begin{equation*}
	\sop : \ltom \to \big( \Vgaom,\normltom{\Delta\cdot} \big) ; f \mapsto u
\end{equation*}
related to the problem \eqref{eq:d_pde} is an isometry, i.e.,
$\normltom{\Delta u}=\normltom{f}$
or simply $\norm{\sop}=1$.
\end{lem}

\begin{proof}
Note $\normltom{f} = \normltom{\Delta u}$.
\end{proof}

Similarly to Theorem \ref{thm:rd_reg} we immediately get the following.
%, for regular enough primal approximations an error equality could be derived using the isometry property, but is seen more easily by just using the problem formulation \eqref{eq:d_pde} itself.

\begin{theo}[error equality for very conforming primal approximations]
\label{thm:d_reg}
Let $u,\ut \in \Vgaom$ be the exact solution and an arbitrary approximation of the problem \eqref{eq:d_pde}, respectively. Then
\begin{equation*}
	\normltom{\Delta(u-\ut)}
	= \normltom{f + \Delta\ut} .
\end{equation*}
\end{theo}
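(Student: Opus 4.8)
The plan is to mirror the argument of Theorem \ref{thm:rd_reg} almost verbatim, replacing the reaction-diffusion isometry by the Laplace isometry just established in Lemma \ref{lem:d_isometry}. The whole proof rests on the observation that a \emph{very conforming} approximation is itself the exact solution of the same type of problem with a perturbed right-hand side, so that the difference $u-\ut$ is again an exact solution and the isometry can be invoked directly.

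Concretely, I would first note that since $\ut\in\Vgaom$ we have $\Delta\ut\in\ltom$ and $\ut$ satisfies the homogeneous Dirichlet condition; hence $\ut$ is the exact solution of the Poisson problem \eqref{eq:d_pde} with source $\ft:=-\Delta\ut\in\ltom$. In operator form this reads $\sop(\ft)=\ut$, while by construction $\sop(f)=u$. Since $\sop$ is linear, subtracting gives $\sop(f-\ft)=u-\ut$.

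Finally, I would apply the isometry property of Lemma \ref{lem:d_isometry}, namely $\normltom{\Delta\,\sop(g)}=\normltom{g}$ for every $g\in\ltom$, to the choice $g=f-\ft$. This yields
\begin{equation*}
	\normltom{\Delta(u-\ut)}=\normltom{f-\ft}=\normltom{f+\Delta\ut},
\end{equation*}
which is exactly the claimed equality.

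The only point requiring a little care — and the nearest thing to an obstacle — is recognising that the high regularity $\ut\in\Vgaom$ is precisely what guarantees both $\ft=-\Delta\ut\in\ltom$ and the correct boundary behaviour, so that the solution operator $\sop$ is genuinely applicable to $\ft$; this is why the theorem is restricted to very conforming primal approximations. Beyond that the computation is purely formal: no Friedrichs inequality \eqref{eq:Cf} or partial-integration argument \eqref{eq:partint} is needed here, since all of that has already been absorbed into the isometry of Lemma \ref{lem:d_isometry}.
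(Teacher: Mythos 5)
Your proposal is correct and follows exactly the route the paper intends: the paper states Theorem \ref{thm:d_reg} with the remark ``Similarly to Theorem \ref{thm:rd_reg} we immediately get the following,'' i.e., it relies on precisely the argument you spell out --- $\ut\in\Vgaom$ solves \eqref{eq:d_pde} with source $\ft=-\Delta\ut$, linearity gives $\sop(f-\ft)=u-\ut$, and the isometry of Lemma \ref{lem:d_isometry} yields the equality. Your closing observation that the Friedrichs inequality enters only through the well-posedness underlying Lemma \ref{lem:d_isometry}, not through the equality itself, matches the paper's own remark that the Friedrichs constant is absent from the error equality.
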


Note that the Friedrichs constant is absent from the above error equality.

%{\color{red}
%\begin{rem}[upper bounds for conforming approximations]%[Error estimates for non-conforming approximations]
%\label{remthm:d_reg}
%Let $\ut\in\hogaom$ and $\phi\in\dom$. 
%Then we have $\pi_{\na}\phi\in\na\hogaom\cap\dom$ and by \eqref{eq:Cfthree}
%\begin{align*}
%\normltom{\na(u-\ut)}
%&\leq\normltom{\na u-\pi_{\na}\phi}
%+\normltom{\pi_{\na}\phi-\na\ut}
%\leq\cf\normltom{f+\div\phi}+\normltom{\pi_{\na}(\phi-\na\ut)}\\
%&\leq\cf\normltom{f+\div\phi}+\normltom{\phi-\na\ut},
%\end{align*}	
%which is the well known conforming error estimate for the Poisson problem,
%see, e.g., \cite[Theorem 3.3]{repinbookone}.
%\end{rem}
%}

Next we consider conforming mixed approximations for which a two-sided error estimate can be derived by using the error equality for the reaction-diffusion problem from Theorem \ref{thm:rd}.

\begin{theo}[upper and lower bounds for conforming mixed approximations]%[error estimate for conforming mixed approximations]
\label{thm:d}
Assume $(u,p),(\ut,\pt)$ in $\hogaom\times\dom$ 
to be the exact solution and an arbitrary approximation of the problem \eqref{eq:d_pde}, respectively. Then
\begin{align*}
&\qquad	\max\Big\{
		\normltom{f+\div\pt}^2 + \frac{1}{2} \normltom{\pt-\na\ut}^2
		\;;\;
		\frac{1}{1+\cf^2} \normltom{\pt-\na\ut}^2
	\Big\} 	\\
&	\leq \normltom{\na(u-\ut)}^2 + \norm{p-\pt}_{\dom}^2
	\leq \big(1+4\cf^2\big) \normltom{f+\div\pt}^2 + 2\normltom{\pt-\na\ut}^2 .
\end{align*}
\end{theo}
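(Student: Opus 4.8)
The plan is to reduce the two-sided estimate to the reaction--diffusion equality of Theorem~\ref{thm:rd}. The key observation is that the exact Poisson pair $(u,p)$, characterised by $p=\na u$ and $-\div p=f$, is simultaneously the exact solution pair of the reaction--diffusion problem \eqref{eq:rd_pde} with the shifted source $g:=f+u\in\ltom$, since then $-\na u+p=0$ and $-\div p+u=f+u=g$. Applying Theorem~\ref{thm:rd} to this reaction--diffusion problem, with the same approximation pair $(\ut,\pt)\in\hogaom\times\dom$, yields
\[
	\norm{u-\ut}_{\hoom}^2+\norm{p-\pt}_{\dom}^2
	=\normltom{g-\ut+\div\pt}^2+\normltom{\pt-\na\ut}^2 .
\]
Writing $r:=f+\div\pt$ and $s:=\pt-\na\ut$ for the two residuals and using $g-\ut+\div\pt=r+(u-\ut)$, I would expand the first square on the right and cancel the term $\normltom{u-\ut}^2$ that then appears on both sides. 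Abbreviating $E:=\normltom{\na(u-\ut)}^2+\norm{p-\pt}_{\dom}^2$ (the quantity to be bounded), this leaves the master identity
\[
	E=\normltom{r}^2+\normltom{s}^2+2\scpltom{r}{u-\ut} ,
\]
which drives all three estimates.

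Before estimating I would record two structural facts. Since $f=-\div p$ we have $r=\div(\pt-p)$, hence $\normltom{r}=\normltom{\div(p-\pt)}$, so $\normltom{r}^2$ is exactly the divergence part of $E$. Moreover, by the partial integration \eqref{eq:partint} (legitimate because $u-\ut\in\hogaom$ and $p-\pt\in\dom$),
\[
	\scpltom{r}{u-\ut}=-\scpltom{\div(p-\pt)}{u-\ut}=\scpltom{\na(u-\ut)}{p-\pt} ,
\]
which, together with $s=\na(u-\ut)-(p-\pt)$, gives the companion expansion
\[
	\normltom{s}^2=\normltom{\na(u-\ut)}^2+\normltom{p-\pt}^2-2\scpltom{r}{u-\ut} .
\]
The sign-indefinite cross term $2\scpltom{r}{u-\ut}$ is the only real obstacle, and it is controlled by the Friedrichs inequality \eqref{eq:Cf}, $\normltom{u-\ut}\le\cf\normltom{\na(u-\ut)}$, followed by Young's inequality with a suitable weight.

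For the two lower bounds I would argue as follows. The first needs no Friedrichs constant: the elementary inequality $\normltom{a-b}^2\le2(\normltom{a}^2+\normltom{b}^2)$ applied to $s$ gives $\tfrac12\normltom{s}^2\le\normltom{\na(u-\ut)}^2+\normltom{p-\pt}^2$, and adding $\normltom{r}^2=\normltom{\div(p-\pt)}^2$ produces $\normltom{r}^2+\tfrac12\normltom{s}^2\le E$. For the second, inserting the Friedrichs--Young bound $-2\scpltom{r}{u-\ut}\le\cf^2\normltom{\na(u-\ut)}^2+\normltom{r}^2$ into the companion expansion yields $\normltom{s}^2\le(1+\cf^2)\normltom{\na(u-\ut)}^2+\normltom{p-\pt}^2+\normltom{r}^2\le(1+\cf^2)E$, i.e.\ $\tfrac{1}{1+\cf^2}\normltom{s}^2\le E$.

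For the upper bound I would return to the master identity and estimate $2\scpltom{r}{u-\ut}\le\tfrac12\normltom{\na(u-\ut)}^2+2\cf^2\normltom{r}^2$ (Friedrichs, then Young with weight $\tfrac12$). The decisive step is to replace $\normltom{\na(u-\ut)}^2$ not by the lossy bound $E$ but by the exact relation $\normltom{\na(u-\ut)}^2=E-\normltom{p-\pt}^2-\normltom{r}^2$; discarding the resulting harmless term $-\tfrac12\normltom{p-\pt}^2$ and absorbing $\tfrac12E$ to the left gives $\tfrac12E\le(\tfrac12+2\cf^2)\normltom{r}^2+\normltom{s}^2$, that is $E\le(1+4\cf^2)\normltom{r}^2+2\normltom{s}^2$. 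I expect this bookkeeping---using the identity for $\normltom{\na(u-\ut)}^2$ rather than a crude inequality---to be exactly what is needed to reach the sharp constant $1+4\cf^2$; it is also where the standing assumption that $\om$ be bounded in one direction (so that \eqref{eq:Cf} holds) becomes indispensable, in contrast to the reaction--diffusion case.
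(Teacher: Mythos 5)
Your proposal is correct and follows essentially the same route as the paper's proof: both reduce to the reaction--diffusion equality of Theorem \ref{thm:rd} with shifted source $f+u$, cancel $\normltom{u-\ut}^2$, control the cross term $2\scpltom{f+\div\pt}{u-\ut}$ by Friedrichs and Young with weight $2$, and use $\normltom{f+\div\pt}=\normltom{\div(p-\pt)}$ to restore the divergence part of the norm, arriving at the same constants. The only cosmetic difference is the second lower bound, where the paper simply drops the nonnegative first term of the equality and applies $\norm{\,\cdot\,}_{\hoom}^2\leq(1+\cf^2)\normltom{\na\,\cdot\,}^2$, while you re-expand $\normltom{\pt-\na\ut}^2$ and apply Friedrichs--Young to the cross term---an equivalent computation yielding the same bound.
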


\begin{proof}
By writing the second equation of \eqref{eq:d_pde} as
\begin{equation*}
	-\div p + u = f + u,
\end{equation*}
$u$ solves the reaction-diffusion problem \eqref{eq:rd_pde} 
with right hand side $f+u$. Theorem \ref{thm:rd} gives then
\begin{equation} \label{eq:d1}
	\norm{u-\ut}_{\hoom}^2 + \norm{p-\pt}_{\dom}^2
	= \normltom{f + u - \ut + \div\pt}^2 + \normltom{\pt-\na\ut}^2 .
\end{equation}
The rest of the proof concentrates on removing the exact solution from the right hand side. By using \eqref{eq:Cf} we estimate the first term on the right hand side as
\begin{align}
	\normltom{f + u - \ut + \div\pt}^2
	& = \normltom{f+\div\pt}^2 + \normltom{u-\ut}^2 + 2\scpltom{f+\div\pt}{u-\ut} \nonumber \\
	%& \leq \normltom{f+\div\pt}^2 + \normltom{u-\ut}^2 + 2\normltom{f+\div\pt} \normltom{u-\ut} \nonumber \\
	& \leq \normltom{f+\div\pt}^2 + \normltom{u-\ut}^2 + 2\cf \normltom{f+\div\pt} \normltom{\na(u-\ut)} \label{eq:d2}  \\
	& \leq \normltom{f+\div\pt}^2 + \normltom{u-\ut}^2 + \gamma \cf^2 \normltom{f+\div\pt}^2
		+  \gamma^{-1} \normltom{\na(u-\ut)}^2 ,\nonumber
\end{align}
which holds for any $\gamma>0$. By choosing $\gamma=2$ 
(there is no need to over-estimate by fixing $\gamma$, but we do it here for simplicity), utilizing 
\begin{align}
\label{fdivformula}
\normltom{f+\div\pt}=\normltom{\div(p-\pt)},
\end{align} 
and combining \eqref{eq:d1} and \eqref{eq:d2} we obtain
\begin{align*}
\frac{1}{2}\normltom{\na(u-\ut)}^2 + \norm{p-\pt}_{\ltom}^2
	&\leq 2\cf^2 \normltom{f+\div\pt}^2 + \normltom{\pt-\na\ut}^2.
\end{align*}
Thus
$\normltom{\na(u-\ut)}^2 + \norm{p-\pt}_{\ltom}^2
\leq 4\cf^2 \normltom{f+\div\pt}^2 + 2\normltom{\pt-\na\ut}^2$
and adding \eqref{fdivformula} to both sides shows the upper bound.
The first lower bound follows from \eqref{eq:low2} (with $\varphi=\ut$ and $\phi=\pt$)
and adding \eqref{fdivformula} to both sides.
%\begin{equation*}
%	\normltom{f+\div\pt}^2 + \frac{1}{2} \normltom{\pt-\na\ut}^2
%	\leq
%	\normltom{\na(u-\ut)}^2 + \norm{p-\pt}_{\dom}^2 .
%\end{equation*}
By estimating $\norm{\cdot}_{\hoom}^2 \leq (1+\cf^2) \normltom{\na\cdot}^2$ in \eqref{eq:d1} we obtain
\begin{equation*}
	\normltom{\pt-\na\ut}^2
	\leq
	(1+\cf^2) \normltom{\na(u-\ut)}^2 + \norm{p-\pt}_{\dom}^2,
\end{equation*}
which gives the second lower bound. %Taking the maximum of these lower bounds results in the final lower bound.
\end{proof}

Note that $\normltom{\na \cdot}$ is equivalent to the full $\hoom$ norm due to \eqref{eq:Cf}, thus providing appropriate error control.

For deriving estimates for non-conforming approximations, we use the weak formulation of the problem as the starting point, i.e., we essentially use the integral identity technique exposed in \cite{repinbookone}. This is the only part in this paper where we use this technique. In the forthcoming proof we utilize the $\ltom$-orthogonal Helmholtz decomposition
\begin{align}
\label{helmholtz}
	\ltom = \na\hogaom \oplus \dzom ,
\end{align}
where $\na\hogaom$ is a closed subspace of $\ltom$ 
due to the Friedrichs inequality \eqref{eq:Cf}. 
We denote by $\pi_{\na}$ and $\pi_{0}$ the orthogonal (Helmholtz) projectors 
onto $\na\hogaom$ and $\dzom$, respectively.
%: for an arbitrary $\varphi\in\ltom$ we have
%\begin{equation*}
%	\varphi = \pi_{\na}\varphi + \pi_{0}\varphi = \na g + h, \qquad
%	\scpltom{\pi_{\na}\varphi}{\pi_{0}\varphi} = \scpltom{\na g}{h} = 0
%\end{equation*}
%for unique $g\in\hogaom$ and $h\in\dzom$.

\begin{theo}[upper bounds for non-conforming approximations]%[Error estimates for non-conforming approximations]
\label{thm:d_nc}
Let the pairs $(u,p)\in\hogaom\times\dom$ and $(\ut,\pt)\in\ltom\times\ltom$ be the exact solution 
and an arbitrary approximation of problem \eqref{eq:d_pde}, respectively. Then
\begin{equation*}
\begin{array}{r@{$\quad$}l}
	\textrm{\bf(i)} & \displaystyle
	\normltom{u-\ut}
	\leq \cf^2\normltom{f+\div\phi}
		+ \cf\normltom{\phi-\na\varphi} 
		+ \normltom{\varphi-\ut},
\\[1em]
	\textrm{\bf(ii)} &
	\normltom{p-\pt}^2
	\leq \big( \cf\normltom{f+\div\phi}
		+ \normltom{\phi-\pt} \big)^2
		+ \normltom{\pt-\na\varphi}^2 
\end{array}	
\end{equation*}
hold for arbitrary $(\varphi,\phi)\in\hogaom\times\dom$.
\end{theo}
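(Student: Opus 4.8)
The plan is to treat the two parts separately, in each case splitting the full (non-conforming) error into a conforming contribution plus a pure approximation contribution, and then removing the unknown exact solution by exploiting that the exact flux satisfies $p=\na u\in\na\hogaom$ together with $\div p=-f$ coming from the second equation of \eqref{eq:d_pde}. The auxiliary fields $\varphi\in\hogaom$ and $\phi\in\dom$ play the role of conforming primal and dual approximations.

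For part (i) I would first apply the triangle inequality $\normltom{u-\ut}\leq\normltom{u-\varphi}+\normltom{\varphi-\ut}$, so that it remains to bound the conforming error $\normltom{u-\varphi}$. Since $u-\varphi\in\hogaom$, Friedrichs' inequality \eqref{eq:Cf} reduces this to estimating the gradient $\normltom{\na(u-\varphi)}$. Here I would use $\na u=p$ to write $\normltom{\na(u-\varphi)}^2=\scpltom{\na(u-\varphi)}{(p-\phi)+(\phi-\na\varphi)}$. The first scalar product is handled by partial integration \eqref{eq:partint}, which turns $\scpltom{\na(u-\varphi)}{p-\phi}$ into $\scpltom{u-\varphi}{f+\div\phi}$ via $\div p=-f$, while the second is treated by Cauchy--Schwarz. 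Applying \eqref{eq:Cf} once more to the factor $\normltom{u-\varphi}$ and cancelling one power of $\normltom{\na(u-\varphi)}$ gives $\normltom{\na(u-\varphi)}\leq\cf\normltom{f+\div\phi}+\normltom{\phi-\na\varphi}$; multiplying by $\cf$ and re-inserting into the triangle inequality yields the stated estimate.

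For part (ii) the $(a+b)^2+c^2$ shape of the right-hand side signals that the orthogonal Helmholtz decomposition \eqref{helmholtz} is the natural tool. Since $p=\na u\in\na\hogaom$ we have $\pi_{0}p=0$, hence $p-\pt=\pi_{\na}(p-\pt)-\pi_{0}\pt$ and, by orthogonality, $\normltom{p-\pt}^2=\normltom{\pi_{\na}(p-\pt)}^2+\normltom{\pi_{0}\pt}^2$. The divergence-free part is immediate: $\na\varphi\in\na\hogaom$ gives $\pi_{0}\na\varphi=0$, so $\pi_{0}\pt=\pi_{0}(\pt-\na\varphi)$ and, $\pi_{0}$ being an orthogonal projector, $\normltom{\pi_{0}\pt}\leq\normltom{\pt-\na\varphi}$, producing the $c^2$ term. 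For the gradient part I would split $\pi_{\na}(p-\pt)=\pi_{\na}(p-\phi)+\pi_{\na}(\phi-\pt)$ and bound the second summand by $\normltom{\phi-\pt}$. The first summand lies in $\na\hogaom\cap\dom$ with $\div\pi_{\na}(p-\phi)=\div(p-\phi)=-f-\div\phi$, so the divergence estimate \eqref{eq:Cfthree} for irrotational fields gives $\normltom{\pi_{\na}(p-\phi)}\leq\cf\normltom{f+\div\phi}$. A final triangle inequality yields $\normltom{\pi_{\na}(p-\pt)}\leq\cf\normltom{f+\div\phi}+\normltom{\phi-\pt}$; squaring and adding the two orthogonal contributions gives the claim.

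The manipulations are elementary, so the only step requiring genuine care is in part (ii): one must verify that $\pi_{\na}(p-\phi)$ really belongs to $\na\hogaom\cap\dom$ before invoking \eqref{eq:Cfthree}. This follows because $p-\phi\in\dom$ and the removed component $\pi_{0}(p-\phi)\in\dzom\subset\dom$ is divergence-free, whence $\pi_{\na}(p-\phi)=(p-\phi)-\pi_{0}(p-\phi)\in\dom$ and its divergence coincides with $\div(p-\phi)$. Once this bookkeeping is settled, both estimates are purely mechanical consequences of Friedrichs' inequality, the Helmholtz orthogonality, and the triangle and Cauchy--Schwarz inequalities.
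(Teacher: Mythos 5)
Your proof is correct, and for the crucial gradient-part estimate it takes a genuinely different route from the paper's. In part (ii) both arguments begin identically: the Helmholtz decomposition \eqref{helmholtz} splits the error orthogonally, and the contraction property of $\pi_{0}$ (together with $\pi_{0}\na\varphi=0$) yields the term $\normltom{\pt-\na\varphi}^2$. The difference lies in the irrotational part. The paper bounds $\normltom{\na u-\pi_{\na}\pt}$ by the integral identity technique: it inserts $\phi$ into the weak formulation $\scpltom{\na u}{\na\psi}=\scpltom{f}{\psi}$, estimates using \eqref{eq:partint}, \eqref{eq:Cf}, and Cauchy--Schwarz, and then tests with $\na\psi:=\na u-\pi_{\na}\pt$ --- indeed, the paper remarks that this is the only place where it resorts to the weak formulation at all. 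You instead split $\pi_{\na}(p-\pt)=\pi_{\na}(p-\phi)+\pi_{\na}(\phi-\pt)$, use $\norm{\pi_{\na}}\leq 1$ on the second summand, and apply the divergence estimate \eqref{eq:Cfthree} to $\pi_{\na}(p-\phi)\in\na\hogaom\cap\dom$; your bookkeeping that subtracting $\pi_{0}(p-\phi)\in\dzom$ preserves both membership in $\dom$ and the divergence is exactly the point that needs checking, and you verify it correctly. This keeps the entire proof within the paper's elementary operator-level framework and dispenses with the weak formulation altogether, which is arguably more consistent with the paper's stated philosophy, at the modest price of invoking \eqref{eq:Cfthree} (itself a consequence of \eqref{eq:Cftwo}). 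Similarly, in part (i) the paper obtains the bound $\normltom{\na(u-\varphi)}\leq\cf\normltom{f+\div\phi}+\normltom{\phi-\na\varphi}$ as a corollary of (ii) with $\pt=\na\varphi$, whereas you prove it directly by writing $\na(u-\varphi)=(p-\phi)+(\phi-\na\varphi)$, integrating by parts via $\div p=-f$, applying \eqref{eq:Cf} twice, and cancelling one factor of $\normltom{\na(u-\varphi)}$ (only note that the cancellation tacitly assumes this factor is nonzero; the trivial case is immediate). The resulting intermediate inequality is identical to the paper's, so the two derivations of (i) are equivalent in content, and both of your estimates reproduce the stated bounds exactly.
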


\begin{rem}[upper bounds for conforming approximations]%[Error estimates for non-conforming approximations]
\label{remthm:d_nc}
Let $\ut\in\hogaom$. Choosing $\pt:=\na\ut$ and $\varphi:=\ut$ 
in Theorem \ref{thm:d_nc} (ii) we obtain for all $\phi\in\dom$
$$	\normltom{\na(u-\ut)}
\leq\cf\normltom{f+\div\phi}+\normltom{\phi-\na\ut},$$
which is the well known conforming error estimate for the Poisson problem,
see, e.g., \cite[Theorem 3.3]{repinbookone}.
\end{rem}

\begin{proof}[Proof of Theorem \ref{thm:d_nc}]
To prove (ii) we decompose the error
%into (note that $\pi_{0} p=0$ since $p=\na u$ for $u\in\hogaom$)
\begin{equation} \label{eq:d_nc1}
	\normltom{p-\pt}^2
%	= \normltom{\pi_{\na}(p-\pt) + \pi_{0}\pt}^2
	= \normltom{\pi_{\na}(p-\pt)}^2 + \normltom{\pi_{0}\pt}^2 
	= \normltom{\na u-\pi_{\na}\pt}^2 + \normltom{\pi_{0}\pt}^2 
\end{equation}
using the Helmholtz decomposition.
The second part of \eqref{eq:d_nc1} can be estimated using orthogonality, i.e.,
\begin{equation*}
	\normltom{\pi_{0}\pt}^2
%	= \scpltom{\pi_{0}\pt}{\pi_{0}\pt}
	%= \scpltom{\pi_{0}\pt}{\pt}
	= \scpltom{\pi_{0}\pt}{\pt-\na\varphi}
	\leq \normltom{\pi_{0}\pt}\normltom{\pt-\na\varphi} 
\end{equation*}
for all $\varphi\in\hogaom$, which yields the estimate
\begin{equation} \label{eq:d_nc2}
	\forall\,\varphi\in\hogaom
	\qquad
	\normltom{\pi_{0}\pt} \leq \normltom{\pt-\na\varphi} .
\end{equation}
For estimating the first part of \eqref{eq:d_nc1} we utilize the weak formulation,
i.e., $u$ satisfies
\begin{equation*}
	\forall\,\psi\in\hogaom
	\qquad
	\scpltom{\na u}{\na \psi} = \scpltom{f}{\psi} .
\end{equation*}
Substracting $\scpltom{\pt}{\na \psi}$ from both sides, 
introducing an arbitrary $\phi\in\dom$, and using \eqref{eq:partint} we obtain
\begin{align*} 
%\label{eq:d_nc3}
%\begin{split}
	\scpltom{\na u-\pt}{\na \psi}
	& = \scpltom{f}{\psi} - \scpltom{\pt}{\na \psi} 
    = \scpltom{f+\div\phi}{\psi} + \scpltom{\phi-\pt}{\na \psi}  \\
	& \leq \big( \cf\normltom{f+\div\phi}
		+ \normltom{\phi-\pt} \big) \normltom{\na \psi} 
%\end{split}
\end{align*}
for all $\psi\in\hogaom$,
where we have used also \eqref{eq:Cf}. Hence by orthogonality
\begin{align*} 
%\label{eq:d_nc3}
\begin{split}
	\scpltom{\na u-\pi_{\na}\pt}{\na \psi}
=	\scpltom{\na u-\pt}{\na \psi}
 \leq \big( \cf\normltom{f+\div\phi}
		+ \normltom{\phi-\pt} \big) \normltom{\na \psi} 
\end{split}
\end{align*}
for all $\psi\in\hogaom$. Especially for $\na\psi:=\na u-\pi_{\na}\pt\in\na\hogaom$ 
we get the error estimate
\begin{align} 
\label{eq:d_nc3}
	\forall\,\phi\in\dom
	\qquad
	\normltom{\na u-\pi_{\na}\pt}
	\leq \cf\normltom{f+\div\phi}
		+ \normltom{\phi-\pt}
\end{align}
%By denoting $\pi_{\na}\pt=\na g$ for  a unique $g\in\hogaom$ the left hand side of \eqref{eq:d_nc3} becomes
%\begin{equation*}
%	\scpltom{\na u-\pt}{\na \psi}
%	= \scpltom{\na u-\pi_{\na}\pt}{\na \psi}
%	= \scpltom{\na(u-g)}{\na \psi} .
%\end{equation*}
%Together with this observation and by setting $w=u-g$ the estimate \eqref{eq:d_nc3} becomes
%\begin{equation} \label{eq:d_nc4}
%	\forall\,\phi\in\dom
%	\qquad
%	\normltom{\na(u-g)}
%	\leq \cf\normltom{f+\div\phi} + \normltom{\phi-\pt}
%\end{equation}
%and since $\normltom{\na(u-g)} = \normltom{\pi_{\na}(p-\pt)}$, 
for the first part of the error \eqref{eq:d_nc1}. 
Combining \eqref{eq:d_nc2} and \eqref{eq:d_nc3} with \eqref{eq:d_nc1} results in (ii).
The upper bound in (i) is seen by introducing an arbitrary $\varphi\in\hogaom$ and estimating
\begin{equation*}
	\normltom{u-\ut}
	\leq \normltom{u-\varphi} + \normltom{\varphi-\ut}
	\leq \cf \normltom{\na(u-\varphi)} + \normltom{\varphi-\ut},
\end{equation*}
where we have used \eqref{eq:Cf}. The proof is complete after estimating the first term on the right hand side by (ii) with $\pt = \na\varphi$.
\end{proof}

Results for semi-conforming mixed approximations 
follow from combining the estimates of Theorem \ref{thm:d_nc}.

\begin{cor}[Upper and lower bounds for semi-conforming approximations]%[Error estimates for semi-conforming approximations]
\label{cor:d_nc_mixed}
\mbox{}
\begin{itemize}
\item[\bf(i)] 
Let $(\ut,\pt)\in\hogaom\times\ltom$. Then
\begin{align*}
\frac{1}{2}\normltom{\pt-\na\ut}^2
&\leq\normltom{\na(u-\ut)}^2 
+\normltom{p-\pt}^2\\
&\leq\big(\cf\normltom{f+\div\theta}
+\normltom{\theta-\na\ut}\big)^2\\
&\qquad+\big(\cf\normltom{f+\div\phi} 
+\normltom{\phi-\pt}\big)^2
+\normltom{\pt-\na\varphi}^2 %+\normltom{\na\ut-\na\psi}^2
\end{align*}
holds for all $\theta,\phi\in\dom$ and all $\varphi\in\hogaom$.
Especially one can choose $\varphi=\ut$.
%\item[\bf(i)] 
%Let $(\ut,\pt)\in\hogaom\times\ltom$. Then
%\begin{multline*}
%	\frac{1}{2} \normltom{\pt-\na\ut}^2
%	\leq \normltom{\na(u-\ut)}^2 + \normltom{p-\pt}^2 \\
%	\leq \brac{\cf\normltom{f+\div\phi} + \normltom{\phi-\na\ut}}^2
%	+ \brac{ \cf\normltom{f+\div\phi} + \normltom{\phi-\pt} }^2
%		+ \normltom{\pt-\na\ut}^2
%\end{multline*}
%for an arbitrary $\phi\in\dom$.
\item[\bf(ii)] 
Let $(\ut,\pt)\in\ltom\times\dom$. Then 
\begin{align*}
&\qquad\normltom{u-\ut}^2 
+\norm{p-\pt}_{\dom}^2\\
&\leq\big(\cf^2\normltom{f+\div\theta}
+\cf\normltom{\theta-\na\psi} 
+\normltom{\psi-\ut}\big)^2
+\big(\cf\normltom{f+\div\phi}
+\normltom{\phi-\pt}\big)^2\\
&\qquad+\normltom{\pt-\na\varphi}^2 
+\normltom{f+\div\pt}^2 
\end{align*}
holds for all $\theta,\phi\in\dom$ and all $\psi,\varphi\in\hogaom$.
Especially, for $\phi=\pt$ it holds
\begin{align*}
&\qquad\normltom{u-\ut}^2 
+\norm{p-\pt}_{\dom}^2\\
&\leq\big(\cf^2\normltom{f+\div\theta}
+\cf\normltom{\theta-\na\psi} 
+\normltom{\psi-\ut}\big)^2
+(\cf^2+1)\normltom{f+\div\pt}^2
+\normltom{\pt-\na\varphi}^2 
\end{align*}
for all $\theta\in\dom$ and all $\psi,\varphi\in\hogaom$.
Especially one can choose $\theta=\pt$.
%\item[\bf(ii)] 
%Let $(\ut,\pt)\in\ltom\times\dom$. Then 
%%Theorem \ref{thm:d_nc} and the identity $\normltom{\div(p-\pt)} = \normltom{f-\div\pt}$ gives
%\begin{multline*}
%	\normltom{u-\ut}^2 + \norm{p-\pt}_{\dom}^2 \\
%	\leq \brac{\cf^2\normltom{f+\div\pt} + \cf\normltom{\pt-\na\varphi}
%		+ \normltom{\varphi-\ut}}^2
%	+ \brac{\cf^2+1} \normltom{f+\div\pt}^2 + \normltom{\pt-\na\varphi}^2
%\end{multline*}
%for an arbitrary $\varphi\in\hogaom$.
\end{itemize}
\end{cor}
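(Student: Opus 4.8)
The plan is to assemble every bound from the two independent one-sided estimates of Theorem \ref{thm:d_nc} for the primal and dual errors, letting each carry its own free auxiliary field.

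For the lower bound in (i), I would simply invoke the triangle-type inequality \eqref{eq:low2} with $\varphi=\ut$ and $\phi=\pt$, which gives $\normltom{\pt-\na\ut}^2\leq2\big(\normltom{p-\pt}^2+\normltom{\na(u-\ut)}^2\big)$ and hence the factor $\frac{1}{2}$. For the upper bound in (i), since $\ut\in\hogaom$ is primal-conforming, I would estimate the gradient error $\normltom{\na(u-\ut)}$ by Remark \ref{remthm:d_nc} (that is, Theorem \ref{thm:d_nc} (ii) specialized to $\pt=\na\ut$, $\varphi=\ut$) with a fresh flux $\theta\in\dom$, and estimate $\normltom{p-\pt}^2$ directly by Theorem \ref{thm:d_nc} (ii) with its own $\phi\in\dom$ and $\varphi\in\hogaom$. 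Squaring the first bound and adding it to the second yields precisely the claimed right-hand side; the admissible choice $\varphi=\ut$ is then available because $\ut\in\hogaom$.

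For (ii) the one genuinely new ingredient is the treatment of the $\dom$-part of the dual error. Because the exact flux satisfies $-\div p=f$, the divergence component is captured exactly, $\normltom{\div(p-\pt)}=\normltom{f+\div\pt}$, so that $\norm{p-\pt}_{\dom}^2=\normltom{p-\pt}^2+\normltom{f+\div\pt}^2$ and no estimation is needed for this contribution. I would then bound the primal error $\normltom{u-\ut}^2$ by Theorem \ref{thm:d_nc} (i) with parameters $\theta\in\dom$, $\psi\in\hogaom$, and the $\ltom$-part of the dual error $\normltom{p-\pt}^2$ by Theorem \ref{thm:d_nc} (ii) with $\phi\in\dom$, $\varphi\in\hogaom$. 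Summing these two squared estimates and adding the exact term $\normltom{f+\div\pt}^2$ produces the first displayed inequality. The specialization $\phi=\pt$ makes $\normltom{\phi-\pt}$ vanish and collapses $(\cf\normltom{f+\div\pt})^2+\normltom{f+\div\pt}^2$ into $(\cf^2+1)\normltom{f+\div\pt}^2$, giving the second form, in which $\theta=\pt$ is admissible since $\pt\in\dom$.

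I expect no real obstacle beyond bookkeeping: the whole argument is an assembly of already-established bounds, and the only point requiring care is recognizing that the primal and dual errors may be estimated with mutually independent auxiliary fields, so that each residual enters with the sharpest available constant rather than being artificially coupled through a single parameter.
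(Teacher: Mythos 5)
Your proposal is correct and follows essentially the same route as the paper's proof: for (i) the paper also applies Theorem \ref{thm:d_nc} (ii) twice and then sets the free function $\psi:=\ut$, which is exactly your invocation of Remark \ref{remthm:d_nc} plus a second independent application of (ii), with the lower bound coming from \eqref{eq:low2} as you say. For (ii) the paper likewise combines Theorem \ref{thm:d_nc} (i) and (ii) with the exact identity \eqref{fdivformula} for the divergence component, and your specialization $\phi=\pt$ collapsing $\cf^2\normltom{f+\div\pt}^2+\normltom{f+\div\pt}^2$ into $(\cf^2+1)\normltom{f+\div\pt}^2$ matches the stated corollary, so there is no gap.
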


\begin{proof}
For $(\ut,\pt)\in\hogaom\times\ltom$ Theorem \ref{thm:d_nc} (ii) (applied twice) shows
\begin{align*}
&\qquad\normltom{\na(u-\ut)}^2 
+\normltom{p-\pt}^2\\
&\leq\big(\cf\normltom{f+\div\theta}
+\normltom{\theta-\na\ut}\big)^2
+\big(\cf\normltom{f+\div\phi} 
+\normltom{\phi-\pt}\big)^2\\
&\qquad+\normltom{\na\ut-\na\psi}^2
+\normltom{\pt-\na\varphi}^2
\end{align*}
for all $\theta,\phi\in\dom$ and all $\psi,\varphi\in\hogaom$.
Choosing $\psi:=\ut$ proves (i).
For $(\ut,\pt)\in\ltom\times\dom$ Theorem \ref{thm:d_nc} and \eqref{fdivformula} yield (ii).
\end{proof}

%====================================================================
%====================================================================

\section{Time-Dependent Parabolic Model Problems: Heat-Problems} \label{sec:TIME}

For time-dependent problems we extend the notation of the previous paragraph to the space-time domain 
$\qt:=I\times\om$, where $\Omega$ is as before and $I:=(0,T)$ is a time-interval with $T>0$. 
Its mantle boundary is $\st$. 
We denote by
\begin{equation*}
	\scpltqt{\,\cdot\,}{\,\cdot\,} 
	= \int_{I} \scpltom{\,\cdot\,}{\,\cdot\,} \, \dlo 
	= \int_{I} \int_{\om} \scp{\,\cdot\,}{\,\cdot\,} \, \dld \, \dlo,
	\qquad %\textrm{and} \qquad
	\normltqt{\,\cdot\,}^2 = \scpltqt{\,\cdot\,}{\,\cdot\,}
\end{equation*}
the inner product and norm for scalar functions or vector fields in $\ltqt$,
and define the Sobolev spaces
\begin{align*}
	\hzoqt  := \set{\varphi\in\ltqt}{\na\varphi\in\ltqt}, \qquad
	\dqt  := \set{\phi\in\ltqt}{\div\phi\in\ltqt} .
\end{align*}
Here $\na$ and $\div$ are as before the spatial gradient and divergence, respectively. These are Hilbert spaces equipped with the respective graph norms $\norm{\,\cdot\,}_{\hzoqt}$, $\norm{\,\cdot\,}_{\dqt}$. We also define the Sobolev spaces
\begin{equation*}
	\hozqt := \set{\varphi\in\ltqt}{\p_{\circ}\varphi\in\ltqt}   ,\qquad
	\hooqt := \hzoqt \cap \hozqt = \ho(\Xi)  ,	
\end{equation*}
where $\p_{\circ}$ denotes the derivative with respect to time,
which are Hilbert spaces equipped with the respective graph norms as well. Note, e.g.,
$\norm{\,\cdot\,}_{\hooqt}^2 = \normltqt{\,\cdot\,}^2 + \normltqt{\na\,\cdot\,}^2 + \normltqt{\p_{\circ}\,\cdot\,}^2$. 
The corresponding Sobolev spaces of functions vanishing on the mantle boundary are introduced by
\begin{align*}
	\hzostqt  := \set{\varphi\in\hzoqt}{\varphi(t,\,\cdot\,)\in\hogaom \text{ a.e. } t\in I}, \qquad
	\hoostqt  := %\set{\varphi\in\hzostqt}{\p_{\circ}\varphi\in\ltqt}= 
		\hzostqt \cap 	\hozqt.
\end{align*}
We also define the more regular Sobolev spaces
\begin{align*}
	\Wqt  := \set{\varphi\in\hooqt}{\na\varphi\in\dqt}, \qquad
	\Wstqt  := \set{\varphi\in\hoostqt}{\na\varphi\in\dqt} ,
\end{align*}
which are Hilbert spaces with the graph norm 
$\norm{\cdot}_{\Wqt}^2 = \norm{\cdot}_{\hooqt}^2 + \norm{\na\cdot}_{\dqt}^2$.
From \eqref{eq:partint} we directly obtain
\begin{equation} 
\label{eq:tpartint}
	\forall\,\varphi\in\hzostqt
	\quad
	\forall\,\phi\in\dqt
	\qquad
	\scpltqt{\na\varphi}{\phi} = -\scpltqt{\varphi}{\div\phi} ,
\end{equation}
and from \eqref{eq:Cf} we obtain
\begin{equation}
 \label{eq:tCf}
	\forall \varphi \in \hzostqt \qquad
	\normltqt{\varphi} \leq \cf \normltqt{\na \varphi} ,
\end{equation}
if the spatial domain $\Omega$ is bounded at least in one direction.
Moreover, for all (functions or vector fields) $\varphi\in\hozqt$ it holds
\begin{align} 
\label{eq:timecross}
2	\scpltqt{\p_{\circ}\varphi}{\varphi}{}
%	& = \int_\om \int_{I} \p_{\circ}\varphi \, \varphi \,\dlo\,\dld
		= \int_\om \int_{I} \p_{\circ}|\varphi|^{2}\,\dlo\,\dld 
%	& = \frac{1}{2} \int_\om \brac{ \norm{\varphi(T,x)}^2
%		- \norm{\varphi(0,x)}^2 } \dx
		=   \normltom{\varphi(T,\,\cdot\,)}^2 -
		\normltom{\varphi(0,\,\cdot\,)}^2  . %\nonumber
\end{align}
%Note that the $\varphi(\,\cdot\,,x)\in\ho(I)\subset\mathsf{C}^{0}(\bar{I})$ f.a.a. $x\in\om$.

%====================================================================

\subsection{Reaction-Diffusion ($\p_{\circ}-\Delta+1$)}
\label{ssec:TRD}

In this section we do not need \eqref{eq:tCf} to hold, i.e., the spatial domain $\Omega$ may even be unbounded. The time-dependent reaction-diffusion problem consists of finding a scalar function 
$u\in\hoostqt$ and a flux $p\in\dqt$ such that
\begin{equation} 
\label{eq:trd_pde}
%\left\{
\begin{array}{r@{$\;$}c@{$\;$}l l}
-\na u + 	p & = & 0 & \quad \textrm{in } \qt , \\
	\p_{\circ} u - \div p + u & = & f & \quad \textrm{in } \qt , \\
%	u & = & 0 & \quad \textrm{on } \st , \\
	u(0,\,\cdot\,) & = & u_0 & \quad \textrm{in } \om ,
\end{array}
%\right.
\end{equation}
where the source $f$ belongs to $\ltqt$ and the initial value $u_0$ belongs to $\hogaom$. 
Using classical techniques, e.g., those leading 
to \cite[p. 111, (2.15)]{ladybook} and \cite[p. 112, Theorem 2.1]{ladybook},
and computing $\normltqt{(\p_{\circ}-\Delta+1)\varphi}^2$ by applying \eqref{eq:timecross} twice shows 
\begin{align}
\label{reacdiffcomp}
	\normltqt{(\p_{\circ}-\Delta+1)\varphi}^2 + \norm{\varphi(0,\,\cdot\,)}_{\hoom}^2
	= \norm{\varphi}_{\hooqt}^2 + \norm{\na\varphi}_{\dqt}^2
	+ \norm{\varphi(T,\,\cdot\,)}_{\hoom}^2
	=:\|\varphi\|_{\Wstqt}^2 
\end{align}
for all $\varphi\in\Wstqt$. 

%\begin{rem}
%\label{lem:trd_isometry:rempartiet}
%Note that \eqref{reacdiffcomp} holds for smoother functions and thus by approximation also in $\Wstqt$.
%Indeed for $\varphi\in\mathsf{X}(\Xi):=\set{\psi\in\Wstqt}{\p_{\circ}\psi\in\hzostqt}$ we can compute
%by \eqref{eq:timecross}
%\begin{align*}
%\normltqt{(\p_{\circ}-\Delta+1)\varphi}^2
%&=\normltqt{\p_{\circ}\varphi}^2
%+\normltqt{\Delta\varphi}^2
%+\normltqt{\varphi}^2
%-2\scpltqt{\p_{\circ}\varphi}{\Delta\varphi}
%+2\scpltqt{\p_{\circ}\varphi}{\varphi}
%-2\scpltqt{\Delta\varphi}{\varphi}\\
%&=\norm{\varphi}_{\hozqt}^2
%+\normltqt{\Delta\varphi}^2
%+2\scpltqt{\p_{\circ}\na\varphi}{\na\varphi}
%+2\scpltqt{\p_{\circ}\varphi}{\varphi}
%+2\normltqt{\na\varphi}^2\\
%&=\norm{\varphi}_{\hooqt}^2
%+\norm{\na\varphi}_{\dqt}^2
%+\norm{\varphi(T,\,\cdot\,)}_{\hoom}^2
%-\norm{\varphi(0,\,\cdot\,)}_{\hoom}^2.
%\end{align*}
%As $\mathsf{X}(\Xi)$ is dense in $\Wstqt$, see, e.g., \cite[p. 111/112]{ladybook},
%we obtain \eqref{reacdiffcomp} by approximation.
%\end{rem}

\begin{rem}
\label{lem:trd_isometry:rempartiet}
Note that \eqref{reacdiffcomp} holds for smoother functions and thus by approximation also in $\Wstqt$.
Indeed for $\varphi\in\mathsf{X}(\Xi):=\set{\psi\in\Wstqt}{\p_{\circ}\psi\in\hzostqt}$
and $\omega\in\reals$ we can compute
by \eqref{eq:timecross}
\begin{align*}
&\qquad\normltqt{(\p_{\circ}-\Delta+\omega)\varphi}^2\\
&=\normltqt{\p_{\circ}\varphi}^2
+\normltqt{\Delta\varphi}^2
+\omega^2\normltqt{\varphi}^2
-2\scpltqt{\p_{\circ}\varphi}{\Delta\varphi}
+2\omega\scpltqt{\p_{\circ}\varphi}{\varphi}
-2\omega\scpltqt{\Delta\varphi}{\varphi}\\
&=\normltqt{\p_{\circ}\varphi}^2
+\omega^2\normltqt{\varphi}^2
+2\omega\normltqt{\na\varphi}^2
+\normltqt{\Delta\varphi}^2
+2\scpltqt{\p_{\circ}\na\varphi}{\na\varphi}
+2\omega\scpltqt{\p_{\circ}\varphi}{\varphi}\\
&=\normltqt{\p_{\circ}\varphi}^2
+\omega^2\normltqt{\varphi}^2
+2\omega\normltqt{\na\varphi}^2
+\normltqt{\Delta\varphi}^2\\
&\qquad+\normltom{\na\varphi(T,\,\cdot\,)}^2
-\normltom{\na\varphi(0,\,\cdot\,)}^2
+\omega\normltom{\varphi(T,\,\cdot\,)}^2
-\omega\normltom{\varphi(0,\,\cdot\,)}^2.
\end{align*}
For $\omega=1$ we obtain \eqref{reacdiffcomp}, i.e.,
$$\normltqt{(\p_{\circ}-\Delta+1)\varphi}^2
=\norm{\varphi}_{\hooqt}^2
+\norm{\na\varphi}_{\dqt}^2
+\norm{\varphi(T,\,\cdot\,)}_{\hoom}^2
-\norm{\varphi(0,\,\cdot\,)}_{\hoom}^2,$$
for these smoother $\varphi$.
As $\mathsf{X}(\Xi)$ is dense in $\Wstqt$, see, e.g., \cite[p. 111/112]{ladybook},
we obtain \eqref{reacdiffcomp} by approximation.
\end{rem}

The following solution theory holds.

\begin{lem}[isometry]
\label{lem:trd_isometry}
\eqref{eq:trd_pde} is uniquely solvable in $\Wstqt$ and the related solution operator
\begin{equation*}
	\sop : \ltqt\times\hogaom \to 
	\big(\Wstqt,\|\,\cdot\,\|_{\Wstqt}\big) ; (f,u_0) \mapsto u
\end{equation*}
is an isometry, i.e., \eqref{reacdiffcomp} holds for $u=\sop(f,u_{0})$, that is
$$\|u\|_{\Wstqt}^2
=\norm{u}_{\hooqt}^2
+\norm{\na u}_{\dqt}^2
+\norm{u(T,\,\cdot\,)}_{\hoom}^2
=\normltqt{f}^2
+\norm{u_{0}}_{\hoom}^2,$$ 
or simply $\norm{\sop}=1$.
\end{lem}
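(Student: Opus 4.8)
The plan is to reduce the mixed system \eqref{eq:trd_pde} to a single scalar parabolic initial--boundary value problem and then to extract both solvability and the isometry directly from the energy identity \eqref{reacdiffcomp}. First I would eliminate the flux: the first equation forces $p=\na u$, and since a solution requires $p\in\dqt$, this is the same as demanding $\na u\in\dqt$; combined with $u\in\hoostqt$ (which carries the homogeneous Dirichlet condition on $\st$) it says precisely $u\in\Wstqt$. Substituting $\div p=\Delta u$ into the second equation, I am left with the equivalent problem of finding $u\in\Wstqt$ such that $(\p_{\circ}-\Delta+1)u=f$ and $u(0,\,\cdot\,)=u_{0}$, after which $p:=\na u\in\dqt$ recovers the dual variable.

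Next I would dispatch uniqueness and the a priori bound for free from \eqref{reacdiffcomp}. For two solutions $u_{1},u_{2}\in\Wstqt$ the difference $u:=u_{1}-u_{2}$ solves the homogeneous problem, so evaluating \eqref{reacdiffcomp} at $u$ gives $\|u\|_{\Wstqt}^2=\normltqt{(\p_{\circ}-\Delta+1)u}^2+\norm{u(0,\,\cdot\,)}_{\hoom}^2=0$ and hence $u=0$; the same identity evaluated at any solution already yields $\|u\|_{\Wstqt}^2=\normltqt{f}^2+\norm{u_{0}}_{\hoom}^2$.

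The hard part will be existence at the correct regularity, namely producing a solution $u$ genuinely lying in $\Wstqt$ (so that both $\p_{\circ}u$ and $\Delta u$ belong to $\ltqt$) for merely $\ltqt$-data $f$ and $\hogaom$-initial value $u_{0}$. The identity \eqref{reacdiffcomp} shows that the map $\varphi\mapsto\big((\p_{\circ}-\Delta+1)\varphi,\varphi(0,\,\cdot\,)\big)$ is an isometry of $\Wstqt$ into $\ltqt\times\hogaom$, so it is injective with closed range, but it does not by itself deliver surjectivity. For that I would invoke the classical maximal parabolic regularity theory, precisely as encoded in \cite[p.~111, (2.15)]{ladybook} and \cite[p.~112, Theorem~2.1]{ladybook}, which furnishes a solution of the stated $\Wstqt$-regularity for every admissible pair $(f,u_{0})$ (equivalently, one solves the corresponding backward adjoint problem to fill out the range). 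This is the only genuinely analytic input; everything else is the elementary algebra of \eqref{reacdiffcomp}.

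Finally, with existence in hand the isometry is immediate: substituting $(\p_{\circ}-\Delta+1)u=f$ and $u(0,\,\cdot\,)=u_{0}$ into \eqref{reacdiffcomp} collapses its left-hand side to $\normltqt{f}^2+\norm{u_{0}}_{\hoom}^2$ and its right-hand side to $\|u\|_{\Wstqt}^2$, which is exactly the asserted equality and hence $\norm{\sop}=1$.
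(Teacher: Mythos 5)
Your proposal is correct and takes essentially the same route as the paper, whose entire proof reads ``Use \eqref{reacdiffcomp} and the same arguments as in the proof of \cite[p.~112, Theorem 2.1]{ladybook}.'' You have merely expanded that one line into its constituent steps---reduction of the mixed system to the scalar problem $(\p_{\circ}-\Delta+1)u=f$ with $p=\na u$, uniqueness and the isometry read off directly from \eqref{reacdiffcomp}, and existence delegated to the classical solvability theory of \cite{ladybook}---including the accurate observation that the identity alone yields injectivity and closed range but not surjectivity.
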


\begin{proof}
Use \eqref{reacdiffcomp} and the same arguments as in the proof of \cite[p. 112, Theorem 2.1]{ladybook}.
\end{proof}

\begin{theo}[error equality for very conforming primal approximations] 
\label{thm:trd_reg}
Let $u,\ut \in \Wstqt$ be the exact solution and an arbitrary approximation of problem \eqref{eq:trd_pde}, respectively. Then
\begin{equation*}
	\norm{u-\ut}_{\hooqt}^2 + \norm{\na(u-\ut)}_{\dqt}^2
	+ \norm{(u-\ut)(T,\,\cdot\,)}_{\hoom}^2
	= \normltqt{f-\p_{\circ}\ut-\ut+\Delta\ut}^2 + \norm{u_0-\ut(0,\,\cdot\,)}_{\hoom}^2 .
\end{equation*}
\end{theo}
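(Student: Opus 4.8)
The plan is to imitate the proof of Theorem~\ref{thm:rd_reg} almost verbatim, replacing the static isometry by the parabolic one from Lemma~\ref{lem:trd_isometry}. The single idea is that a very conforming approximation $\ut\in\Wstqt$ is nothing but the exact solution of \eqref{eq:trd_pde} for a modified right hand side and modified initial value. Accordingly I would introduce
\[
\ft:=\p_{\circ}\ut-\Delta\ut+\ut,\qquad \ut_0:=\ut(0,\,\cdot\,),
\]
so that $\ut$ satisfies $\p_{\circ}\ut-\Delta\ut+\ut=\ft$ in $\qt$ together with $\ut(0,\,\cdot\,)=\ut_0$, that is $\sop(\ft,\ut_0)=\ut$.

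Before invoking $\sop$ I would verify that the modified data lie in the admissible spaces $\ltqt\times\hogaom$. Since $\ut\in\Wstqt$ we have $\p_{\circ}\ut\in\ltqt$ and $\Delta\ut=\div\na\ut\in\ltqt$ (because $\na\ut\in\dqt$), whence $\ft\in\ltqt$; the trace $\ut(0,\,\cdot\,)$ lies in $\hogaom$ by the same trace theory that renders the boundary terms in \eqref{reacdiffcomp} meaningful, see Remark~\ref{lem:trd_isometry:rempartiet}. Linearity of $\sop$ then gives $\sop(f-\ft,\,u_0-\ut_0)=u-\ut$, and the isometry of Lemma~\ref{lem:trd_isometry} yields
\[
\|u-\ut\|_{\Wstqt}^2=\normltqt{f-\ft}^2+\norm{u_0-\ut_0}_{\hoom}^2 .
\]
Expanding the left hand side through the definition of $\|\,\cdot\,\|_{\Wstqt}$ in \eqref{reacdiffcomp} reproduces the three error quantities on the left of the claim, while substituting $\ft=\p_{\circ}\ut-\Delta\ut+\ut$ turns $\normltqt{f-\ft}^2$ into $\normltqt{f-\p_{\circ}\ut-\ut+\Delta\ut}^2$, and $\ut_0=\ut(0,\,\cdot\,)$ matches the final term.

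I do not anticipate a genuine obstacle: all the analytic content, namely unique solvability in $\Wstqt$ and the isometry identity \eqref{reacdiffcomp}, has already been supplied by Lemma~\ref{lem:trd_isometry} and Remark~\ref{lem:trd_isometry:rempartiet}. The only point deserving a line of justification is that $\ut$ is regular enough for $(\ft,\ut_0)$ to be valid data, i.e.\ $\ft\in\ltqt$ and $\ut(0,\,\cdot\,)\in\hogaom$; once this is noted the identity follows instantly from linearity and the isometry, exactly as in the elliptic case.
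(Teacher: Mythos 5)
Your proposal is correct and coincides with the paper's own argument: the paper's proof consists precisely of the remark that it is ``as simple as the proof of Theorem \ref{thm:rd_reg}'', i.e., one writes $\ut=\sop(\ft,\ut_0)$ with $\ft:=\p_{\circ}\ut-\Delta\ut+\ut$ and $\ut_0:=\ut(0,\,\cdot\,)$ and applies linearity together with the isometry of Lemma \ref{lem:trd_isometry}. Your extra check that $(\ft,\ut_0)\in\ltqt\times\hogaom$, justified via $\na\ut\in\dqt$ and the trace considerations of Remark \ref{lem:trd_isometry:rempartiet}, is a welcome explicit verification of a point the paper leaves implicit.
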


\begin{proof}
The proof is as simple as the proof of Theorem \ref{thm:rd_reg}.
%: We observe that the high regularity of the approximation enables us to write $\sop(\ft) = \ut$ for the right hand side $\ft:=\p_{\circ}\ut-\Delta\ut+\ut$. Due to linearity of the solution operator we have $\sop(f-\ft) = u-\ut$ and the first error equality follows directly from the isometry property of the solution operator exposed in Lemma \ref{lem:trd_isometry}.
\end{proof}

\begin{theo}[error equality for conforming mixed approximations]
\label{thm:trd}
Let $(u,p),(\ut,\pt) \in \hoostqt\times\dqt$ be the exact solution and an arbitrary approximation of problem \eqref{eq:trd_pde}, respectively. Then
\begin{align*}
&\qquad	\norm{u-\ut}_{\hzoqt}^2 + \normltqt{p-\pt}^2 + \normltqt{\p_{\circ}(u-\ut) + \div(\pt-p)}^2
	+ \normltom{(u-\ut)(T,\,\cdot\,)}^2 \\
&	= \normltqt{f-\p_{\circ}\ut-\ut+\div\pt}^2 + \normltqt{\pt-\na\ut}^2
	+ \normltom{u_0-\ut(0,\,\cdot\,)}^2 .
\end{align*}
\end{theo}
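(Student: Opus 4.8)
The plan is to follow the pattern of the proof of Theorem~\ref{thm:rd}, starting from the right-hand side and using the equations of \eqref{eq:trd_pde} to replace the three residuals by genuine error quantities. From the second and third equations, $f-\p_{\circ}\ut-\ut+\div\pt=\p_{\circ}(u-\ut)+(u-\ut)+\div(\pt-p)$ and $u_0-\ut(0,\,\cdot\,)=(u-\ut)(0,\,\cdot\,)$, while the first equation gives $\pt-\na\ut=(\pt-p)+\na(u-\ut)$. Abbreviating the errors by $e:=u-\ut\in\hoostqt$ and $q:=\pt-p\in\dqt$, the right-hand side becomes $\normltqt{\p_{\circ}e+e+\div q}^2+\normltqt{q+\na e}^2+\normltom{e(0,\,\cdot\,)}^2$.

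Next I would expand both squared norms. The five purely quadratic contributions $\normltqt{\p_{\circ}e}^2$, $\normltqt{e}^2$, $\normltqt{\div q}^2$, $\normltqt{q}^2$, $\normltqt{\na e}^2$, together with the single mixed term $2\scpltqt{\p_{\circ}e}{\div q}$, reassemble precisely into the left-hand side $\norm{e}_{\hzoqt}^2+\normltqt{p-\pt}^2+\normltqt{\p_{\circ}e+\div q}^2$ (using $\normltqt{p-\pt}^2=\normltqt{q}^2$ and expanding the last square). The entire content of the theorem thus reduces to the single boundary identity
\[
	2\scpltqt{\p_{\circ}e}{e}+2\scpltqt{e}{\div q}+2\scpltqt{q}{\na e}+\normltom{e(0,\,\cdot\,)}^2=\normltom{e(T,\,\cdot\,)}^2 .
\]

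The two remaining cross terms are disposed of by the two integration-by-parts rules available in the excerpt. Since $e\in\hoostqt\subset\hzostqt$ and $q\in\dqt$, the spatial formula \eqref{eq:tpartint} gives $\scpltqt{q}{\na e}=-\scpltqt{e}{\div q}$, so the two spatial cross terms cancel. For the temporal cross term I note $e\in\hozqt$, whence the fundamental-theorem identity \eqref{eq:timecross} yields $2\scpltqt{\p_{\circ}e}{e}=\normltom{e(T,\,\cdot\,)}^2-\normltom{e(0,\,\cdot\,)}^2$; adding back $\normltom{e(0,\,\cdot\,)}^2$ produces exactly $\normltom{e(T,\,\cdot\,)}^2$, establishing the identity.

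I expect this to be almost entirely routine bookkeeping once the substitution is carried out; the only genuinely nontrivial step---and the structural reason the statement carries the asymmetric terminal term $\normltom{(u-\ut)(T,\,\cdot\,)}^2$ on the left and the initial-data residual $\normltom{u_0-\ut(0,\,\cdot\,)}^2$ on the right---is the temporal integration by parts \eqref{eq:timecross}, which converts the time-derivative cross term into the difference of the two time-slice norms. The principal things to watch are the sign in $q=\pt-p$ versus $p-\pt$ and the correct pairing of $\normltqt{\p_{\circ}(u-\ut)+\div(\pt-p)}^2$ on the left with the expanded square on the right.
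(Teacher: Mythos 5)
Your proposal is correct and follows essentially the same route as the paper's proof: substitute the equations of \eqref{eq:trd_pde} into the residuals, expand the squares, cancel the spatial cross terms via \eqref{eq:tpartint}, and convert the temporal cross term into the time-slice norms via \eqref{eq:timecross}. The only difference is cosmetic bookkeeping—you expand $\normltqt{\p_{\circ}e+e+\div q}^2$ fully and then reassemble $\normltqt{\p_{\circ}e+\div q}^2$, whereas the paper keeps that block intact from the start—so the two arguments are the same.
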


\begin{proof}
By the second equation of \eqref{eq:trd_pde} and using \eqref{eq:timecross} we obtain
\begin{align}
\label{eq:trd1}
\begin{split}
	\normltqt{f-\ut+\div\pt-\p_{\circ}\ut}^2
	& = \normltqt{u-\ut + \p_{\circ}(u-\ut) + \div(\pt-p)}^2  \\
	& = \normltqt{u-\ut}^2 + \normltqt{\p_{\circ}(u-\ut) + \div(\pt-p)}^2  \\
	& \qquad  + 2\scpltqt{u-\ut}{\p_{\circ}(u-\ut)} + 2\scpltqt{u-\ut}{\div(\pt-p)}\\
	& = \normltqt{u-\ut}^2 + \normltqt{\p_{\circ}(u-\ut) + \div(\pt-p)}^2  \\
	& \qquad + \normltom{(u-\ut)(T,\,\cdot\,)}^2
			 - \normltom{(u-\ut)(0,\,\cdot\,)}^2
			 + 2\scpltqt{u-\ut}{\div(\pt-p)} .
\end{split}
\end{align}
On the other hand, by inserting the first equation of \eqref{eq:trd_pde} we obtain
\begin{align}
\label{eq:trd2} 
	\normltqt{\pt-\na\ut}^2
 = \normltqt{\pt-p + \na(u-\ut)}^2 
 = \normltqt{\pt-p}^2 + \normltqt{\na(u-\ut)}^2
		+ 2\scpltqt{\pt-p}{\na(u-\ut)} . 
\end{align}
By \eqref{eq:tpartint} and adding \eqref{eq:trd1} and \eqref{eq:trd2} together shows the assertion.
%\begin{align*}
%	\normltqt{f-\ut+\div\pt-\p_{\circ}\ut}^2 + \normltqt{\pt-\na\ut}^2
%	& = \norm{u-\ut}_{\hzoqt}^2 + \normltqt{\pt-p}^2 + \normltqt{\p_{\circ}(u-\ut) + \div(\pt-p)}^2 \\
%	& \qquad + \normltom{(u-\ut)(T,\,\cdot\,)}^2 - \normltom{(u-\ut)(0,\,\cdot\,)}^2	
%\end{align*}
%since by \eqref{eq:tpartint} the two cross terms cancel each other. The result is obtained by rearrangement of terms.
\end{proof}

%====================================================================

\subsection{Heat Equation ($\p_{\circ}-\Delta$)}
\label{ssec:H}

In this section we assume that the Friedrichs inequality \eqref{eq:tCf} holds. 
The heat equation consists of finding a scalar function $u\in\hoostqt$ and a flux $p\in\dqt$ such that
\begin{equation} 
\label{eq:h_pde}
%\left\{
\begin{array}{r@{$\;$}c@{$\;$}l l}
-\na u +	p & = & 0 & \quad \textrm{in } \qt , \\
	\p_{\circ} u - \div p & = & f & \quad \textrm{in } \qt , \\
%	u & = & 0 & \quad \textrm{on } \st , \\
	u(0,\,\cdot\,) & = & u_0 & \quad \textrm{in } \om ,
\end{array}
%\right.
\end{equation}
where again the source $f$ belongs to $\ltqt$ and the initial value $u_0$ belongs to $\hogaom$. 
By \cite[p. 111, (2.15)]{ladybook} we have
\begin{align}
\label{heatcomp}
	\normltqt{(\p_{\circ}-\Delta)\varphi}^2 + \normltom{\na\varphi(0,\,\cdot\,)}^2 
	= \normltqt{\p_{\circ}\varphi}^2 + \normltqt{\Delta\varphi}^2
	+ \normltom{\na\varphi(T,\,\cdot\,)}^2 
	=:|\!|\!|\varphi|\!|\!|_{\Wstqt}^2 
\end{align}
for all $\varphi\in\Wstqt$, see also Remark \ref{lem:trd_isometry:rempartiet} for $\omega=0$. 
\cite[p. 112, Theorem 2.1]{ladybook} shows that \eqref{eq:h_pde} is uniquely solvable in $\Wstqt$
and that the solution satisfies \eqref{heatcomp}.
Note that $|\!|\!|\,\cdot\,|\!|\!|_{\Wstqt}$ is indeed a norm on $\Wstqt$,
which follows from \eqref{eq:Cftwo} and the surrounding caculations. 

\begin{lem}[isometry]
\label{lem:h_isometry}
The solution operator
\begin{equation*}
	\sop : \ltqt\times\hogaom \to \big(\Wstqt,|\!|\!|\,\cdot\,|\!|\!|_{\Wstqt}\big) ; (f,u_0) \mapsto u
\end{equation*}
related to the problem \eqref{eq:h_pde} is an isometry, i.e., 
\eqref{heatcomp} holds for $u=\sop(f,u_{0})$, that is
$$	|\!|\!|u|\!|\!|_{\Wstqt}^2 
=\normltqt{\p_{\circ}u}^2
+\normltqt{\Delta u}^2
+\normltom{\na u(T,\,\cdot\,)}^2
=\normltqt{f}^2
+\normltom{\na u_{0}}^2,$$
or simply $\norm{\sop}=1$.
%\begin{equation*}
%	\normltqt{f}^2 + \normltom{\na u_0}^2
%	= \normltqt{\p_{\circ} u}^2 + \normltqt{\Delta u}^2
%	+ \normltom{\na u(T,\,\cdot\,)}^2 .
%\end{equation*}
\end{lem}

\begin{theo}[error equality for very conforming primal approximations] 
\label{thm:h_reg}
Let $u,\ut \in \Wstqt$ be the exact solution and an arbitrary approximation of problem \eqref{eq:h_pde}, respectively. Then
\begin{equation*}
	\normltqt{\p_{\circ}(u-\ut)}^2 + \normltqt{\Delta(u-\ut)}^2
	+ \normltom{\na(u-\ut)(T,\,\cdot\,)}^2
	= \normltqt{f+\Delta\ut-\p_{\circ}\ut}^2 + \normltom{\na(u_0-\ut(0,\,\cdot\,))}^2 .
\end{equation*}
\end{theo}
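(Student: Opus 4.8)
The plan is to mimic the proof of Theorem \ref{thm:rd_reg} verbatim, now with the isometry of Lemma \ref{lem:h_isometry} playing the role that Lemma \ref{lem:rd_isometry} played in the static case. First I would observe that, because $\ut\in\Wstqt$ is very regular, it is itself the exact solution of the heat problem \eqref{eq:h_pde} for the modified data
\[
\ft:=\p_{\circ}\ut-\Delta\ut\in\ltqt,\qquad \ut(0,\,\cdot\,)\in\hogaom,
\]
that is, $\sop(\ft,\ut(0,\,\cdot\,))=\ut$. The membership $\ft\in\ltqt$ and the fact that the time-trace $\ut(0,\,\cdot\,)$ lies in $\hogaom$ are exactly what $\ut\in\Wstqt\subset\hoostqt$ provides.

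Next, since the solution operator $\sop$ is linear, subtraction gives $\sop(f-\ft,\,u_0-\ut(0,\,\cdot\,))=u-\ut$. Applying the isometry property \eqref{heatcomp} of Lemma \ref{lem:h_isometry} to the data pair $(f-\ft,\,u_0-\ut(0,\,\cdot\,))$ then yields immediately
\[
\ttnorm{u-\ut}_{\Wstqt}^2=\normltqt{f-\ft}^2+\normltom{\na(u_0-\ut(0,\,\cdot\,))}^2 .
\]

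Finally I would unfold both sides by their definitions. On the left, \eqref{heatcomp} expands $\ttnorm{u-\ut}_{\Wstqt}^2$ into $\normltqt{\p_{\circ}(u-\ut)}^2+\normltqt{\Delta(u-\ut)}^2+\normltom{\na(u-\ut)(T,\,\cdot\,)}^2$, while on the right $f-\ft=f-\p_{\circ}\ut+\Delta\ut=f+\Delta\ut-\p_{\circ}\ut$. Substituting these reproduces exactly the asserted equality, so no genuine computation beyond this bookkeeping is needed.

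The only point requiring care — the content hidden behind the remark that this proof is ``as simple as'' the static one — is the justification that $\ut$ is an \emph{admissible} exact solution, so that the isometry really applies to it: one must know that $(\ft,\ut(0,\,\cdot\,))\in\ltqt\times\hogaom$ and that the identity \eqref{heatcomp} is valid for the difference $u-\ut\in\Wstqt$ and not merely for smoother functions. Both are guaranteed by $\ut\in\Wstqt$ together with the fact, established around Lemma \ref{lem:h_isometry} (and by the approximation argument in Remark \ref{lem:trd_isometry:rempartiet} with $\omega=0$), that \eqref{heatcomp} holds for every element of $\Wstqt$. Given this, the proof is a one-line application of linearity and the isometry.
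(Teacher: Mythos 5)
Your proposal is correct and is precisely the argument the paper intends: its proof of Theorem \ref{thm:h_reg} is just the reference ``as simple as the proofs of Theorem \ref{thm:rd_reg} and Theorem \ref{thm:trd_reg}'', and you reproduce that pattern faithfully --- defining $\ft:=\p_{\circ}\ut-\Delta\ut$, using linearity to get $\sop(f-\ft,\,u_0-\ut(0,\,\cdot\,))=u-\ut$, and applying the isometry \eqref{heatcomp} of Lemma \ref{lem:h_isometry}. Your added care about the admissibility of $(\ft,\ut(0,\,\cdot\,))$ and the validity of \eqref{heatcomp} on all of $\Wstqt$ (via Remark \ref{lem:trd_isometry:rempartiet} with $\omega=0$) correctly fills in exactly what the paper leaves tacit.
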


\begin{proof}
Again, the proof is as simple as the proofs of Theorem \ref{thm:rd_reg} and Theorem \ref{thm:trd_reg}.
%The proof is exactly the same as the proof of Theorem \ref{thm:rd_reg}: We observe that the high regularity of the approximation enables us to write $\sop(\ft) = \ut$ for the right hand side $\ft:=\p_{\circ}\ut-\Delta\ut$. Due to linearity of the solution operator we have $\sop(f-\ft) = u-\ut$ and the error equality follows directly from the isometry property of the solution operator exposed in Lemma \ref{lem:h_isometry}.
\end{proof}

The latter result is similar to the error equality for the reaction-diffusion equation 
in Theorem \ref{thm:d_reg}. Note that the Friedrichs constant is absent.
Next we consider conforming mixed approximations for which a two-sided error estimate can be derived by using the error equality for the time-dependent reaction-diffusion problem from Theorem \ref{thm:trd} 
(similarly to what was done in Theorem \ref{thm:d} for the diffusion problem).

\begin{theo}[error estimate for conforming mixed approximations]
\label{thm:h}
Let $(u,p),(\ut,\pt) \in \hoostqt\times\dqt$ be the exact solution and an arbitrary approximation of problem \eqref{eq:h_pde}, respectively. Then
\begin{align*}
	& \qquad \max\Big\{
		\normltqt{f+\div\pt-\p_{\circ}\ut}^2 + \frac{1}{2}\normltqt{\pt-\na\ut}^2 
		\;;\;
		\frac{1}{1+\cf^2} \big(\normltqt{\pt-\na\ut}^2 + \normltom{u_0-\ut(0,\,\cdot\,)}^2\big)
	  \Big\}	
	\\	
	& \leq \normltqt{\na(u-\ut)}^2 + \normltqt{p-\pt}^2
		+ \normltqt{\p_{\circ}(u-\ut) + \div(\pt-p)}^2 + \normltom{(u-\ut)(T,\,\cdot\,)}^2 \\
	& \leq \brac{1+4\cf^2}\normltqt{f + \div\pt -\p_{\circ}\ut}^2
		+ 2\normltqt{\pt-\na\ut}^2 + 2\normltom{u_0-\ut(0,\,\cdot\,)}^2 .
\end{align*}
\end{theo}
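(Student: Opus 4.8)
The plan is to mimic the proof of Theorem \ref{thm:d} line by line, now using the time-dependent reaction-diffusion error equality of Theorem \ref{thm:trd} in place of the static Theorem \ref{thm:rd}. First I would rewrite the second equation of \eqref{eq:h_pde} as $\p_{\circ}u-\div p+u=f+u$, so that the exact solution $u$ of the heat problem is simultaneously the exact solution of the time-dependent reaction-diffusion problem \eqref{eq:trd_pde} with right-hand side $f+u$ and the same initial value $u_0$. Applying Theorem \ref{thm:trd} to this reaction-diffusion problem, with the same arbitrary approximation $(\ut,\pt)$, then yields an error equality whose left-hand side is
$$\normltqt{u-\ut}^2+\normltqt{\na(u-\ut)}^2+\normltqt{p-\pt}^2+\normltqt{\p_{\circ}(u-\ut)+\div(\pt-p)}^2+\normltom{(u-\ut)(T,\,\cdot\,)}^2,$$
equal to $\normltqt{(f+u)+\div\pt-\p_{\circ}\ut-\ut}^2+\normltqt{\pt-\na\ut}^2+\normltom{u_0-\ut(0,\,\cdot\,)}^2$.

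The next step is to record the time-dependent analogue of \eqref{fdivformula}: from $f=\p_{\circ}u-\div p$ one has $f+\div\pt-\p_{\circ}\ut=\p_{\circ}(u-\ut)+\div(\pt-p)$, so the residual $\normltqt{f+\div\pt-\p_{\circ}\ut}$ coincides with the term $\normltqt{\p_{\circ}(u-\ut)+\div(\pt-p)}$ already present in the error norm. Writing the argument of the first right-hand side term as $(f+\div\pt-\p_{\circ}\ut)+(u-\ut)$ and expanding the square produces $\normltqt{u-\ut}^2$ (which cancels on the left), $\normltqt{f+\div\pt-\p_{\circ}\ut}^2$ (which also matches the left-hand side), and the cross term $2\scpltqt{f+\div\pt-\p_{\circ}\ut}{u-\ut}$.

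The one genuinely new estimate, and the main obstacle, is removing the exact solution from this cross term; this is precisely the difficulty met in Theorem \ref{thm:d}, and the extra temporal and end-time contributions are harmless here because they have already been correctly balanced by Theorem \ref{thm:trd}. Applying Cauchy–Schwarz, the Friedrichs inequality \eqref{eq:tCf} (valid since $u-\ut\in\hzostqt$), and Young's inequality with $\gamma=2$ exactly as in \eqref{eq:d2}, the cross term is bounded by $2\cf^2\normltqt{f+\div\pt-\p_{\circ}\ut}^2+\tfrac12\normltqt{\na(u-\ut)}^2$. After the cancellations, absorbing $\tfrac12\normltqt{\na(u-\ut)}^2$ to the left, rescaling, and adding $\normltqt{\p_{\circ}(u-\ut)+\div(\pt-p)}^2=\normltqt{f+\div\pt-\p_{\circ}\ut}^2$ back to reconstruct the full error norm yields the upper bound with the stated constants $1+4\cf^2$, $2$, $2$.

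Finally, the two lower bounds require no new ideas. For the first, the identity $\pt-\na\ut=(\pt-p)+\na(u-\ut)$ together with the elementary inequality behind \eqref{eq:low2} gives $\tfrac12\normltqt{\pt-\na\ut}^2\le\normltqt{\na(u-\ut)}^2+\normltqt{p-\pt}^2$; adding $\normltqt{\p_{\circ}(u-\ut)+\div(\pt-p)}^2$ and discarding the nonnegative end-time term produces the first entry of the maximum. For the second, I would keep the error equality in unexpanded form and bound $\normltqt{u-\ut}^2\le\cf^2\normltqt{\na(u-\ut)}^2$ via \eqref{eq:tCf}, so that its full left-hand side is at most $(1+\cf^2)$ times the error norm of the theorem; since that left-hand side also dominates $\normltqt{\pt-\na\ut}^2+\normltom{u_0-\ut(0,\,\cdot\,)}^2$, dividing by $1+\cf^2$ delivers the second entry of the maximum.
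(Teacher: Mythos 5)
Your proposal is correct and takes essentially the same route as the paper's own proof: reduce to the time-dependent reaction-diffusion problem with right-hand side $f+u$ via Theorem \ref{thm:trd}, eliminate the cross term $2\scpltqt{f+\div\pt-\p_{\circ}\ut}{u-\ut}$ using \eqref{eq:tCf} and Young's inequality with $\gamma=2$, and reconstruct the full error norm through the residual identity \eqref{fdivptformula}, arriving at the constants $1+4\cf^2$, $2$, $2$. Your two lower bounds also match the paper exactly (the triangle-inequality argument behind \eqref{eq:low2} plus \eqref{fdivptformula} for the first, and the bound $\norm{\,\cdot\,}_{\hzoqt}^2\leq(1+\cf^2)\normltqt{\na\,\cdot\,}^2$ applied to \eqref{eq:h1} for the second).
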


We note that $\normltqt{\na\,\cdot\,}$ is equivalent to the full $\hzoqt$ norm on  $\hzostqt$ 
due to \eqref{eq:tCf} and thus provides appropriate error control.

\begin{proof}
By writing the second equation of \eqref{eq:h_pde} as
\begin{equation*}
	\p_{\circ} u - \div p + u = f + u,
\end{equation*}
$u$ solves the time-dependent reaction-diffusion problem \eqref{eq:trd_pde} with right hand side $f+u$. 
Theorem \ref{thm:trd} shows
\begin{align} 
\label{eq:h1}
\begin{split}
&\qquad	\norm{u-\ut}_{\hzoqt}^2 + \normltqt{p-\pt}^2 + \normltqt{\p_{\circ}(u-\ut) + \div(\pt-p)}^2
	+ \normltom{(u-\ut)(T,\,\cdot\,)}^2 \\
&	= \normltqt{f+u-\ut+\div\pt-\p_{\circ}\ut}^2 + \normltqt{\pt-\na\ut}^2
	+ \normltom{u_0-\ut(0,\,\cdot\,)}^2 .
\end{split}
\end{align}
By using \eqref{eq:tCf} we estimate the first term on the right hand side as
\begin{align}
 \label{eq:h2}
 \begin{split}
	& \qquad \normltqt{f+u-\ut+\div\pt-\p_{\circ}\ut}^2 \\
	& = \normltqt{f+\div\pt-\p_{\circ}\ut}^2 + \normltqt{u-\ut}^2
		+ 2\scpltqt{f+\div\pt-\p_{\circ}\ut}{u-\ut} \\
	%& \leq \normltqt{f+\div\pt-\p_{\circ}\ut}^2 + \normltqt{u-\ut}^2
	%	+ 2\normltqt{f+\div\pt-\p_{\circ}\ut} \normltqt{u-\ut}  \\
	& \leq \normltqt{f+\div\pt-\p_{\circ}\ut}^2 + \normltqt{u-\ut}^2
		+ 2\cf\normltqt{f+\div\pt-\p_{\circ}\ut} \normltqt{\na(u-\ut)} \\
	& \leq \normltqt{f+\div\pt-\p_{\circ}\ut}^2 + \normltqt{u-\ut}^2
		+ \gamma\cf^2\normltqt{f+\div\pt-\p_{\circ}\ut}^2
		+ \gamma^{-1} \normltqt{\na(u-\ut)}^2  ,
\end{split}
\end{align}
which holds for any $\gamma>0$. By choosing $\gamma=2$ (there is no need to over-estimate by fixing $\gamma$, but we do it here for brevity) and combining \eqref{eq:h1} and \eqref{eq:h2} we obtain
\begin{align*}
&\qquad	\frac{1}{2} \normltqt{\na(u-\ut)}^2 + \normltqt{p-\pt}^2
		+ \normltqt{\p_{\circ}(u-\ut) + \div(\pt-p)}^2 + \normltom{(u-\ut)(T,\,\cdot\,)}^2 \\
&	\leq \brac{1+2\cf^2} \normltqt{f+\div\pt-\p_{\circ}\ut}^2 + \normltqt{\pt-\na\ut}^2
		+ \normltom{u_0-\ut(0,\,\cdot\,)}^2 .
\end{align*}
Since 
\begin{align}
\label{fdivptformula}
\normltqt{f+\div\pt-\p_{\circ}\ut}=\normltqt{\p_{\circ}(u-\ut) + \div(\pt-p)},
\end{align}
we have proven the upper bound.
To prove the first lower bound we simply insert $0=\na u - p$ in
\begin{equation*}
	\normltqt{\pt-\na\ut}^2
	= \normltqt{\pt-p+\na u-\na\ut}^2
	\leq 2 \brac{\normltqt{\pt-p}^2 + \normltqt{\na(u-\ut)}^2} 
\end{equation*}
and use \eqref{fdivptformula}.
%Multiplying this by $1/2$ and adding $\normltqt{f+\div\pt-\p_{\circ}\ut}^2=\normltqt{\p_{\circ}(u-\ut) + \div(\pt-p)}^2$ to both sides gives
%\begin{multline*}
%	\normltqt{f+\div\pt-\p_{\circ}\ut}^2 + \frac{1}{2}\normltqt{\pt-\na\ut}^2
%	\\ \leq
%	\normltqt{\na(u-\ut)}^2 + \normltqt{p-\pt}^2
%	+ \normltqt{\p_{\circ}(u-\ut) + \div(\pt-p)}^2 + \normltom{(u-\ut)(T,\,\cdot\,)}^2.
%\end{multline*}
By estimating $\norm{\,\cdot\,}_{\hzoqt}^2 \leq (1+\cf^2) \normltqt{\na\,\cdot\,}^2$ in \eqref{eq:h1} we obtain
\begin{align*}
& \qquad	\normltqt{\pt-\na\ut}^2 + \normltom{u_0-\ut(0,\,\cdot\,)}^2 \\ 
& \leq	(1+\cf^2) \normltqt{\na(u-\ut)}^2 + \normltqt{p-\pt}^2
	+ \normltqt{\p_{\circ}(u-\ut) + \div(\pt-p)}^2
	+ \normltom{(u-\ut)(T,\,\cdot\,)}^2,
\end{align*}
which shows the second lower bound. %Taking the maximum of these lower bounds results in the final lower bound.
\end{proof}

%====================================================================
%====================================================================

\bibliographystyle{plain} 
\bibliography{biblio}

\comment{

\appendix
\section{Some More Estimates and Ideas for the Heat Equation}

In general it is clear that for the solution $u\in\Wstqt$ of \eqref{eq:h_pde}
we have by Lemma \ref{lem:h_isometry} and \eqref{eq:tCf}
\begin{align*}
&\qquad\normltqt{\p_{\circ}u}^2
+(1+\cf^2)^{-2}\big(\normltqt{u}^2
+2\normltqt{\na u}^2
+\normltqt{\Delta u}^2\big)
+(1+\cf^2)^{-1}\norm{u(T,\,\cdot\,)}_{\hoom}^2\\
&\leq\normltqt{\p_{\circ}u}^2
+\normltqt{\Delta u}^2
+\normltom{\na u(T,\,\cdot\,)}^2
=|\!|\!|u|\!|\!|_{\Wstqt}^2 
=\normltqt{f}^2
+\normltom{\na u_{0}}^2.
\end{align*}
Hence for $\ut \in \Wstqt$ we get 
\begin{align*}
&\qquad\qquad\normltqt{\p_{\circ}(u-\ut)}^2
+(1+\cf^2)^{-2}\big(\normltqt{(u-\ut)}^2
+2\normltqt{\na(u-\ut)}^2
+\normltqt{\Delta(u-\ut)}^2\big)\\
&\qquad+(1+\cf^2)^{-1}\norm{(u-\ut)(T,\,\cdot\,)}_{\hoom}^2\\
&\leq\normltqt{\p_{\circ}(u-\ut)}^2
+\normltqt{\Delta(u-\ut)}^2
+\normltom{\na(u-\ut)(T,\,\cdot\,)}^2\\
&=|\!|\!|(u-\ut)|\!|\!|_{\Wstqt}^2 
=\normltqt{f+\Delta\ut-\p_{\circ}\ut}^2
+\normltom{\na(u_0-\ut(0,\,\cdot\,))}^2.
\end{align*}

\subsection{Very Conforming Approximations}

Let $u\in\Wstqt$ be the solution of \eqref{eq:h_pde}.
By \eqref{eq:timecross} we have
\begin{align}
\label{fueq}
\scpltqt{f}{u}
&=\scpltqt{(\p_{\circ}-\Delta)u}{u}
=\frac{1}{2}\normltom{u(T,\,\cdot\,)}^2
-\frac{1}{2}\normltom{u_{0}}^2
+\normltqt{\na u}^2,\\
\label{fdcueq}
\begin{split}
\scpltqt{f}{\p_{\circ}u}
&=\scpltqt{(\p_{\circ}-\Delta)u}{\p_{\circ}u}
=\normltqt{\p_{\circ}u}^2
+\scpltqt{\p_{\circ}\na u}{\na u}\\
&=\normltqt{\p_{\circ}u}^2
+\frac{1}{2}\normltom{\na u(T,\,\cdot\,)}^2
-\frac{1}{2}\normltom{\na u_{0}}^2,
\end{split}\\
\label{fDeltaueq}
\begin{split}
-\scpltqt{f}{\Delta u}
&=-\scpltqt{(\p_{\circ}-\Delta)u}{\Delta u}
=\scpltqt{\p_{\circ}\na u}{\na u}
+\normltqt{\Delta u}^2\\
&=\frac{1}{2}\normltom{\na u(T,\,\cdot\,)}^2
-\frac{1}{2}\normltom{\na u_{0}}^2
+\normltqt{\Delta u}^2.
\end{split}
\end{align}
Adding \eqref{fdcueq} and \eqref{fDeltaueq} shows again the isometry of Lemma \ref{lem:h_isometry}, i.e.,
\begin{align}
\label{isoheateq}
|\!|\!|u|\!|\!|_{\Wstqt}^2 
=\normltqt{\p_{\circ}u}^2
+\normltqt{\Delta u}^2
+\normltom{\na u(T,\,\cdot\,)}^2
=\normltqt{f}^2
+\normltom{\na u_{0}}^2.
\end{align}
\eqref{fdcueq} yields for all $\alpha>0$
\begin{align}
\nonumber
2\normltqt{\na u}^2
+\normltom{u(T,\,\cdot\,)}^2
&=2\scpltqt{f}{u}
+\normltom{u_{0}}^2
\leq\frac{1}{\alpha}\normltqt{f}^2
+\alpha\normltqt{u}^2
+\normltom{u_{0}}^2
\intertext{and hence by \eqref{eq:tCf}}
\label{nauisoest}
(2-\alpha\cf^2)\normltqt{\na u}^2
+\normltom{u(T,\,\cdot\,)}^2
&\leq\frac{1}{\alpha}\normltqt{f}^2
+\normltom{u_{0}}^2.
\end{align}
Combing \eqref{isoheateq} and \eqref{nauisoest} shows for all $\alpha>0$
\begin{align}
\label{refinedisoheateq}
\normltqt{\p_{\circ}u}^2
+\normltqt{\Delta u}^2
+(2-\alpha\cf^2)\normltqt{\na u}^2
+\norm{u(T,\,\cdot\,)}_{\hoom}^2
\leq(1+\frac{1}{\alpha})\normltqt{f}^2
+\norm{u_{0}}_{\hoom}^2.
\end{align}

\subsection{Conforming Mixed Approximations}

Let $(u,p),(\ut,\pt) \in \hoostqt\times\dqt$ be the exact solution 
and an arbitrary approximation of problem \eqref{eq:h_pde}.
Using the Helmholtz decomposition \eqref{helmholtz} we get f.a.a. $t\in I$
\begin{align}
\label{helmholtzpt}
\dom\ni\pt=\na u_{\pt}+\pt_{0}\in\na\hogaom\oplus\dzom,\qquad
u_{\pt}\in\hogaom,\qquad
\pt_{0}\in\dzom.
\end{align}
Then f.a.a. $t\in I$ we see $u_{\pt}\in\hogaom$ and $\na u_{\pt}\in\dom\cap\na\hogaom$
with $\Delta u_{\pt}=\div\na u_{\pt}=\div\pt$. By \eqref{eq:tCf}
{\color{red}(What condition on $\pt$? 
Probably $\p_{\circ}\pt\in\dqt$?)}
we have $u_{\pt}\in\Wstqt$.
By Theorem \ref{thm:h_reg} we obtain
\begin{align*}
&\qquad\normltqt{\p_{\circ}(u-u_{\pt})}^2
+\normltqt{\div(p-\pt)}^2
+\normltom{\na(u-u_{\pt})(T,\,\cdot\,)}^2\\
&=\normltqt{f+\div\pt-\p_{\circ}u_{\pt}}^2
+\normltom{\na(u_0-u_{\pt}(0,\,\cdot\,))}^2
\end{align*}
and hence
\begin{align*}
\pm2\scpltqt{\p_{\circ}(u-\ut)}{\div(\pt-p)}
&\leq\alpha\normltqt{\p_{\circ}(u-\ut)}^2
+\frac{1}{\alpha}\normltqt{\div(\pt-p)}^2\\
&\leq\alpha\normltqt{\p_{\circ}(u-\ut)}^2
+\frac{1}{\alpha}\normltqt{f+\div\pt-\p_{\circ}u_{\pt}}^2
+\frac{1}{\alpha}\normltom{\na(u_0-u_{\pt}(0,\,\cdot\,))}^2.
\end{align*}
Therefore, utilizing \eqref{fdivptformula} we estimate
\begin{align*}
&\qquad\normltqt{\p_{\circ}(u-\ut)}^2
+\normltqt{\div(\pt-p)}^2\\
&=\normltqt{\p_{\circ}(u-\ut)+\div(\pt-p)}^2
-2\scpltqt{\p_{\circ}(u-\ut)}{\div(\pt-p)}\\
&\leq\normltqt{f+\div\pt-\p_{\circ}\ut}^2
+\alpha\normltqt{\p_{\circ}(u-\ut)}^2
+\frac{1}{\alpha}\normltqt{f+\div\pt-\p_{\circ}u_{\pt}}^2
+\frac{1}{\alpha}\normltom{\na(u_0-u_{\pt}(0,\,\cdot\,))}^2
\intertext{showing}
&\qquad(1-\alpha)\normltqt{\p_{\circ}(u-\ut)}^2
+\normltqt{\div(\pt-p)}^2\\
&\leq(1+\frac{2}{\alpha})\normltqt{f+\div\pt-\p_{\circ}\ut}^2
+\frac{2}{\alpha}\normltqt{\p_{\circ}(\ut-u_{\pt})}^2
+\frac{2}{\alpha}\normltom{\na(u_0-\ut(0,\,\cdot\,))}^2
+\frac{2}{\alpha}\normltom{\na(\ut-u_{\pt})(0,\,\cdot\,)}^2.
\end{align*}
By Theorem \ref{thm:h} we have the upper bound
\begin{align*}
&\qquad\normltqt{\na(u-\ut)}^2 
+\normltqt{p-\pt}^2
+\normltom{(u-\ut)(T,\,\cdot\,)}^2\\
&\leq4\cf^2\normltqt{f+\div\pt-\p_{\circ}\ut}^2
+2\normltqt{\pt-\na\ut}^2
+2\normltom{u_0-\ut(0,\,\cdot\,)}^2.
\end{align*}
Therefore, 
\begin{align}
\label{appestone}
\begin{split}
&\qquad(1-\alpha)\normltqt{\p_{\circ}(u-\ut)}^2
+\normltqt{\na(u-\ut)}^2 
+\norm{p-\pt}_{\dqt}^2
+\normltom{(u-\ut)(T,\,\cdot\,)}^2\\
&\leq(1+\frac{2}{\alpha}+4\cf^2)\normltqt{f+\div\pt-\p_{\circ}\ut}^2
+2\normltqt{\pt-\na\ut}^2
+2\normltom{u_0-\ut(0,\,\cdot\,)}^2\\
&\qquad+\frac{2}{\alpha}\normltqt{\p_{\circ}(\ut-u_{\pt})}^2
+\frac{2}{\alpha}\normltom{\na(u_0-\ut(0,\,\cdot\,))}^2
+\frac{2}{\alpha}\normltom{\na(\ut-u_{\pt})(0,\,\cdot\,)}^2.
\end{split}
\end{align}
Thus it remains to estimate the terms
\begin{align}
\label{remaintermsest}
\normltqt{\p_{\circ}(\ut-u_{\pt})}^2
+\normltom{\na(\ut-u_{\pt})(0,\,\cdot\,)}^2.
\end{align}
Some options:
\begin{itemize}
\item 
It holds
$$\normltom{\na(\ut-u_{\pt})(0,\,\cdot\,)}
=\normltom{\pi_{\na}\na(\ut-u_{\pt})(0,\,\cdot\,)}
=\normltom{\pi_{\na}(\na\ut-{\pt})(0,\,\cdot\,)}
\leq\normltom{(\na\ut-{\pt})(0,\,\cdot\,)}.$$
\item 
By \eqref{helmholtzpt} it holds 
{\color{red}(What condition on $\ut$? 
Probably $\p_{\circ}\na\ut\in\ltqt$?)}
\begin{align}
\label{helmholtzptpc}
\p_{\circ}\pt=\na\p_{\circ}u_{\pt}+\p_{\circ}\pt_{0}\in\na\hogaom\oplus\dzom,\qquad
\p_{\circ}u_{\pt}\in\hogaom,\qquad
\p_{\circ}\pt_{0}\in\dzom
\end{align}
and thus
\begin{align*}
\normltqt{\p_{\circ}(\ut-u_{\pt})}
\leq\cf\normltqt{\na\p_{\circ}(\ut-u_{\pt})}
=\cf\normltqt{\pi_{\na}\p_{\circ}(\na\ut-\pt)}
\leq\cf\normltqt{\p_{\circ}(\na\ut-\pt)}.
\end{align*}
\end{itemize}
Finally
\begin{align}
\label{appesttwo}
\begin{split}
&\qquad(1-\alpha)\normltqt{\p_{\circ}(u-\ut)}^2
+\normltqt{\na(u-\ut)}^2 
+\norm{p-\pt}_{\dqt}^2
+\normltom{(u-\ut)(T,\,\cdot\,)}^2\\
&\leq(1+\frac{2}{\alpha}+4\cf^2)\normltqt{f+\div\pt-\p_{\circ}\ut}^2
+2\normltqt{\pt-\na\ut}^2
+2\normltom{u_0-\ut(0,\,\cdot\,)}^2\\
&\qquad+\frac{2\cf^2}{\alpha}\normltqt{\p_{\circ}(\na\ut-\pt)}^2
+\frac{2}{\alpha}\normltom{\na(u_0-\ut(0,\,\cdot\,))}^2
+\frac{2}{\alpha}\normltom{(\na\ut-{\pt})(0,\,\cdot\,)}^2.
\end{split}
\end{align}

%---------------
%
%
%\begin{align}
%\label{fdivptformulaapp}
%\normltqt{f+\div\pt-\p_{\circ}\ut}=\normltqt{\p_{\circ}(u-\ut) + \div(\pt-p)},
%\end{align}
%
%
%\begin{theo}[error estimate for conforming mixed approximations]
%\label{thm:h-app}
%Let $(u,p),(\ut,\pt) \in \hoostqt\times\dqt$ be the exact solution and an arbitrary approximation of problem \eqref{eq:h_pde}, respectively. Then
%\begin{align*}
%	& \qquad \max\Big\{
%		\normltqt{f+\div\pt-\p_{\circ}\ut}^2 + \frac{1}{2}\normltqt{\pt-\na\ut}^2 
%		\;;\;
%		\frac{1}{1+\cf^2} \big(\normltqt{\pt-\na\ut}^2 + \normltom{u_0-\ut(0,\,\cdot\,)}^2\big)
%	  \Big\}	
%	\\	
%	& \leq \normltqt{\na(u-\ut)}^2 + \normltqt{p-\pt}^2
%		+ \normltqt{\p_{\circ}(u-\ut) + \div(\pt-p)}^2 + \normltom{(u-\ut)(T,\,\cdot\,)}^2 \\
%	& \leq \brac{1+4\cf^2}\normltqt{f + \div\pt -\p_{\circ}\ut}^2
%		+ 2\normltqt{\pt-\na\ut}^2 + 2\normltom{u_0-\ut(0,\,\cdot\,)}^2 .
%\end{align*}
%\end{theo}
%
%
%
%\begin{theo}[error equality for very conforming primal approximations] 
%\label{thm:h_regapp}
%Let $u,\ut \in \Wstqt$ be the exact solution and an arbitrary approximation of problem \eqref{eq:h_pde}, respectively. Then
%\begin{equation*}
%	\normltqt{\p_{\circ}(u-\ut)}^2 + \normltqt{\Delta(u-\ut)}^2
%	+ \normltom{\na(u-\ut)(T,\,\cdot\,)}^2
%	= \normltqt{f+\Delta\ut-\p_{\circ}\ut}^2 + \normltom{\na(u_0-\ut(0,\,\cdot\,))}^2 .
%\end{equation*}
%\end{theo}
%

\newpage

\section{Reaction-Diffusion Probelms: Non-Conforming estimates}

\subsection{Stationary Reaction-Diffusion}

Theorem \ref{thm:rd_nc} contains error estimates for non-conforming approximations just from $\ltom$. These estimates are actually \emph{sharp}, i.e., thoretically there is no gap between the exact error and the estimate: set $(\varphi,\phi)=(u,p)$ and let $\gamma$ tend to 0. There is another 'more complicated' way to derive error estimates for approximations from $\ltom$, but surprisingly, these estimates are \emph{not sharp}. We present these estimates below. Note that by setting $(\varphi,\phi)=(u,p)$ below we see that the over-estimation is at least by factor 2. What is the benefit of the below theorem then? All of the estimates contain the same amount of free functions, and the only difference seems to be the absense of the Young constant $\gamma$.

\begin{theo}[upper bounds for non-conforming approximations]
\label{thm:rd_nc2}
Let $(u,p) \in \hogaom\times\dom$ be the exact solution pair 
and let $(\ut,\pt)\in\ltom\times\ltom$ be an arbitrary approximation pair 
of the problem \eqref{eq:rd_pde}, respectively. Then
\begin{align*}
\textrm{\bf(i)}
&
& 
\normltom{u-\ut}^2
&\leq
\normltom{f-\ut+\div\phi}^2
+\frac{1}{2}\normltom{\phi-\na\varphi}^2
+\normltom{\varphi-\ut}^2,\\
\textrm{\bf(ii)}
&
&
\normltom{p-\pt}^2
&\leq
\frac{1}{2}\normltom{f-\varphi+\div\phi}^2
+\normltom{\pt-\na\varphi}^2
+\normltom{\phi-\pt}^2,\\
\textrm{\bf(iii)} 
&
&
\normltom{u-\ut}^2
+\normltom{p-\pt}^2
&\leq
\normltom{f-\ut+\div\phi}^2
+\normltom{\pt-\na\varphi}^2
+\normltom{\phi-\pt}^2
+\normltom{\varphi-\ut}^2
\end{align*}
hold for arbitrary $(\varphi,\phi)\in\hogaom\times\dom$.
\end{theo}

\begin{proof}[Proof of (iii)]
The proof is based on similar calculations which were done in the proof of Theorem \ref{thm:rd}. By the second equation of \eqref{eq:rd_pde} and inserting $0=\na u - p$ we obtain
\begin{align} \label{eq:rd_nc2_1}
\begin{split}
	\normltom{f-\ut+\div\phi}^2 + \normltom{\phi-\pt}^2
	& = \normltom{u-\ut + \div(\phi-p)}^2 + \normltom{\phi-p + p-\pt}^2 \\
	& = \normltom{u-\ut}^2 + \normltom{\div(\phi-p)}^2
		+ 2\scpltom{u-\ut}{\div(\phi-p)} \\
	& \qquad + \normltom{\phi-p}^2 + \normltom{p-\pt}^2
		+ 2\scpltom{\phi-p}{p-\pt}
\end{split}
\end{align}
Since $\phi-\pt\in\d$ and $u\in\hogaom$, by \eqref{eq:partint} we can replace $u$ with any $\varphi\in\hogaom$ in the mixed terms above and obtain
\begin{align} \label{eq:rd_nc2_2}
\begin{split}
	2\scpltom{u-\ut}{\div(\phi-p)} + 2\scpltom{\phi-p}{\na u-\pt}
	& = 2\scpltom{\varphi-\ut}{\div(\phi-p)}
		+ 2\scpltom{\phi-p}{\na\varphi-\pt} \\
	& \ge - \normltom{\varphi-\ut}^2 - \normltom{\div(\phi-p)}^2 \\
	& \qquad - \normltom{\phi-p}^2 - \normltom{\na\varphi-\pt}^2
\end{split}
\end{align}
Combining the above two calculations we obtain
\begin{equation*}
	\normltom{f-\ut+\div\phi}^2 + \normltom{\phi-\pt}^2
	\ge \normltom{u-\ut}^2 + \normltom{p-\pt}^2
	- \normltom{\varphi-\ut}^2 - \normltom{\na\varphi-\pt}^2 ,
\end{equation*}
and (iii) results after rearrangement of terms.
\end{proof}

\begin{proof}[Proof of (i)]
We set $\pt=\na\varphi$ in \eqref{eq:rd_nc2_1} and obtain
\begin{align*}
	\normltom{f-\ut+\div\phi}^2 + \normltom{\phi-\na\varphi}^2
	& = \normltom{u-\ut}^2 + \normltom{\div(\phi-p)}^2
		+ 2\scpltom{u-\ut}{\div(\phi-p)} \\
	& \qquad + \normltom{\phi-p}^2 + \normltom{\na(u-\varphi)}^2
		+ 2\scpltom{\phi-p}{\na(u-\varphi)}
\end{align*}
We have the obvious inequality
\begin{equation*}
	\frac{1}{2} \normltom{\phi-\na\varphi}
	= \frac{1}{2} \normltom{\phi-p+\na(u-\varphi)}
	\leq \normltom{\phi-p}^2 + \normltom{\na(u-\varphi)}^2 .
\end{equation*}
The cross terms can be written as
\begin{align*}
	2\scpltom{u-\ut}{\div(\phi-p)} + 2\scpltom{\phi-p}{\na(u-\varphi)}
	& = 2\scpltom{u-\ut}{\div(\phi-p)} - 2\scpltom{\div(\phi-p)}{u-\varphi} \\
	& = 2\scpltom{\varphi-\ut}{\div(\phi-p)} \\
	& \ge - \normltom{\varphi-\ut}^2 - \normltom{\div(\phi-p)} .
\end{align*}
Combining the above three calculations results in
\begin{align*}
	\normltom{f-\ut+\div\phi}^2 + \normltom{\phi-\na\varphi}^2
	\ge \normltom{u-\ut}^2 + \frac{1}{2} \normltom{\phi-\na\varphi}
	- \normltom{\varphi-\ut}^2 ,
\end{align*}
and (i) results after rearrangement of terms.
\end{proof}

\begin{proof}[Proof of (ii)]
We set $\ut=\varphi$ in \eqref{eq:rd_nc2_1} and obtain
\begin{align*}
	\normltom{f-\varphi+\div\phi}^2 + \normltom{\phi-\pt}^2
	& = \normltom{u-\varphi}^2 + \normltom{\div(\phi-p)}^2
		+ 2\scpltom{u-\varphi}{\div(\phi-p)} \\
	& \qquad + \normltom{\phi-p}^2 + \normltom{p-\pt}^2
		+ 2\scpltom{\phi-p}{p-\pt}
\end{align*}
We have the obvious inequality
\begin{equation*}
	\frac{1}{2} \normltom{f-\varphi+\div\phi}
	= \frac{1}{2} \normltom{u-\varphi+\div(\phi-p)}
	\leq \normltom{u-\varphi}^2 + \normltom{\div(\phi-p)}^2 .
\end{equation*}
The cross terms can be written as
\begin{align*}
	2\scpltom{u-\varphi}{\div(\phi-p)} + 2\scpltom{\phi-p}{p-\pt}
	& = - 2\scpltom{\na(u-\varphi)}{\phi-p} + 2\scpltom{\phi-p}{p-\pt} \\
	& = \scpltom{\phi-p}{\na\varphi-\pt} \\
	& \ge - \normltom{\phi-p}^2 - \normltom{\na\varphi-\pt} .
\end{align*}
Combining the above three calculations results in
\begin{align*}
	\normltom{f-\varphi+\div\phi}^2 + \normltom{\phi-\pt}^2
	\ge \normltom{p-\pt}^2 + \frac{1}{2} \normltom{f-\varphi+\div\phi}
	- \normltom{\na\varphi-\pt}
\end{align*}
and (ii) results after rearrangement of terms.
\end{proof}

Note that Theorem \ref{thm:rd_nc2}(iii) can be used to obtain estimates in 'broken norms' for the primal variable. For example, for a discontinuous galerkin approximation $\ut$ we can set it's broken gradient into $\pt$. In the following we denote by $\bna$ the 'broken' gradient, and by $\bhoom$ the 'broken' $\ho$-space. The graph norm in this space is naturally $\norm{\cdot}_{\,\bhoom\,}^2 := \normltom{\,\cdot\,}^2 + \normltom{\bna\,\cdot\,}^2$. Note that for functions $\varphi\in\hoom$ these operators coincide, i.e., $\bna\varphi=\na\varphi$.

\begin{cor}
Let $(u,p)\in\hogaom\times\d$ be the exact solution of \eqref{eq:rd_pde}, and $\ut\in\bhoom$ an arbitrary approximation of the primal variable $u$. Then
\begin{align*}
	\textrm{\bf(i)} \quad \norm{u-\ut}_{\bhoom}^2 & \leq
	\normltom{f-\ut+\div\phi}^2 + \normltom{\phi-\bna\ut}^2
		+ \norm{\varphi-\ut}^2_{\bhoom} \\
	\textrm{\bf(ii)} \quad \norm{u-\ut}_{\bhoom}^2 & \leq
	\big(1+\frac{1}{\gamma}\big) \big( \normltom{f-\varphi+\div\phi}^2
		+ \normltom{\phi-\na\varphi}^2 \big)
		+ \brac{1+\gamma} \norm{\varphi-\ut}^2_{\bhoom}
\end{align*}
hold for any $(\varphi,\phi)\in\hogaom\times\dom$ and $\gamma>0$.
\end{cor}

\begin{proof}
We set $\pt=\bna\ut$ in \ref{thm:rd_nc2}(iii) and directly obtain (i). The estimate (ii) follows by the triangle-inequality
\begin{equation*}
	\norm{u-\ut}^2_{\bhoom} 
	\leq \big(1+\frac{1}{\gamma}\big) \norm{u-\varphi}^2_{\bhoom}
		+ \brac{1+\gamma} \norm{\varphi-\ut}^2_{\bhoom}
	= \big(1+\frac{1}{\gamma}\big) \norm{u-\varphi}^2_{\hoom}
		+ \brac{1+\gamma} \norm{\varphi-\ut}^2_{\bhoom}
\end{equation*}
and estimating $\norm{u-\varphi}^2_{\hoom}$ by Remark \ref{rem2thm:rd}.
\end{proof}

We define the 'broken' divergence $\bdiv$ and the 'broken' space $\bdom$ similarly as the other broken stuff.

\begin{theo}
Let $(u,p)\in\hogaom\times\d$ be the exact solution of \eqref{eq:rd_pde}, and $\pt\in\bdom$ an arbitrary approximation of the dual variable $p$. Then
\begin{align*}
	\textrm{\bf(i)} \quad \norm{p-\pt}_{\bdom}^2 & \leq
	\normltom{f-\varphi+\bdiv\pt}^2 + \normltom{\pt-\na\varphi}^2
		+ \norm{\phi-\pt}^2_{\bdom} \\
	\textrm{\bf(ii)} \quad \norm{p-\pt}_{\bdom}^2 & \leq
	\big(1+\frac{1}{\gamma}\big) \big( \normltom{f-\varphi+\div\phi}^2
		+ \normltom{\phi-\na\varphi}^2 \big)
		+ \brac{1+\gamma} \norm{\phi-\pt}^2_{\bdom}
\end{align*}
holds for any $(\varphi,\phi)\in\hogaom\times\dom$ and $\gamma>0$.
\end{theo}

\begin{proof}
Unfortunately this does not follow directly from the previous theorems. However, the proof is pretty much the same. For (i) just open up $\normltom{f-\varphi+\bdiv\pt}^2 + \normltom{\pt-\na\varphi}^2$ and do the usual stuff. For (ii) use the triangle-inequality and Remark \ref{rem2thm:rd}.
\end{proof}

\begin{rem}
One can also do similar calculations for semi-conforming approximations.
\end{rem}

\subsection{Time-Dependent Reaction-Diffusion}

We can derive at least the following for $(\ut,\pt)\in\hoostqt\times\ltqt$
\begin{multline*}
	\normltqt{u-\ut}^2 + \normltom{(u-\ut)(T,\cdot)}^2
	+ \normltqt{\p_{\circ}(u-\ut)+\div(\phi-p)}^2 +
	\normltqt{p-\pt}^2 + \brac{1-\frac{1}{\gamma}} \normltqt{\phi-p}^2 \\
	\leq \normltqt{f-\ut+\div\phi-\p_{\circ}\ut}^2 + \normltqt{\phi-\pt}^2
	+ \normltom{(u-\ut)(0,\cdot)}^2 + \gamma\normltqt{\na\ut-\pt}^2
\end{multline*}
for arbitrary $\phi\in\dqt$ and $\gamma>0$.

\begin{proof}
Calculate what is $\normltqt{f-\ut+\div\phi-\p_{\circ}\ut}^2 + \normltqt{\phi-\pt}^2$.
\end{proof}

}

\end{document}